\documentclass[12pt,reqno]{amsart}
\usepackage{etoolbox}

\DeclareMathAlphabet\EuScript{U}{eus}{m}{n}
\DeclareMathAlphabet\EuScriptBold{U}{eus}{b}{n}
\DeclareMathAlphabet\Eurm{U}{eur}{m}{n}
\DeclareMathAlphabet\Eurb{U}{eur}{b}{n}
\newcommand{\mathcalb}{\EuScriptBold}

\usepackage{mathtools}
\usepackage{etoolbox}
\usepackage{url}

\DeclareFontFamily{U}{matha}{\hyphenchar\font45}
\DeclareFontShape{U}{matha}{m}{n}{
	<-6> matha5 <6-7> matha6 <7-8> matha7
	<8-9> matha8 <9-10> matha9
	<10-12> matha10 <12-> matha12
}{}
\DeclareSymbolFont{matha}{U}{matha}{m}{n}

\DeclareFontFamily{U}{mathx}{\hyphenchar\font45}
\DeclareFontShape{U}{mathx}{m}{n}{
	<-6> mathx5 <6-7> mathx6 <7-8> mathx7
	<8-9> mathx8 <9-10> mathx9
	<10-12> mathx10 <12-> mathx12
}{}
\DeclareSymbolFont{mathx}{U}{mathx}{m}{n}

\DeclareMathDelimiter{\vvvert} {0}{matha}{"7E}{mathx}{"17}%

\DeclarePairedDelimiterX{\normiii}[1]
{\vvvert}
{\vvvert}
{\ifblank{#1}{\:\cdot\:}{#1}}
\newcommand\scal[2]{\left\langle #1,#2\right\rangle}

\let\CMcal=\mathcal

\def\HH{\mathcal{H}}
\def\BH{\mathcalb{B}(\mathcal{H})}

\def\KH{\mathcalb{K}(\HH)}

\def\M{\mathfrak{M}}

\def\N{\mathbb{N}}

\def\C{\mathbb{C}}

\newcommand\ccphi{ {\mathcalb C}_{\Phi}({\mathcal H})}

\newcommand{\cci}{ {\mathcalb  C}_\iii({\mathcal H}) }

\newcommand{\GG}{\mathbb{G}}

\newcommand{\tr}{\operatorname{tr}}

\newcommand{\phip}{{\Phi^{(p)}}}

\newcommand{\AAA}{ {\mathscr A} }
\newcommand{\AAt}{ {\mathscr A}_t }
\newcommand{\BBB}{ {\mathscr B} }

\newcommand{\WWW}{\Omega}

\newcommand{\iii}{\infty}

\newcommand{\sumn}{\sum_{n=1}^\iii}

\def\BB{\mathfrak B}

\newcommand{\cphi}{\mathfrak{c}_{\Phi}}

\def\cphin{\cphi^{(\circ)}}
\def\cphip{\mathfrak{c}_{\Phi^{(p)}}}
\def\cphipn{\mathfrak{c}^{(\circ)}_{\Phi^{(p)}}}

\newcommand{\ccc}{ {\mathcalb C}}

\newcommand{\ccphip}{ {\mathcalb C}_{\Phi^{(p)}}({\mathcal H}) }

\newcommand{\ccphipn}{ {\mathcalb C}^{(\circ)}_{\Phi^{(p)}}({\mathcal H}) }

\newcommand{\ccphiz}{{\mathcalb C}_{\Phi^{*}}({\mathcal H})}

\newcommand{\ccphin}{{\mathcalb C}^{(\circ)}_{\Phi}({\mathcal H})}

\newcommand{\ccphipz}{{\mathcalb C}_{\Phi^{(p)^*}}({\mathcal H})}

\newcommand{\ccj}{ {\mathcalb  C}_1({\CMcal H}) }

\newcommand{\PP}{\mathbb{P}}

\usepackage{mathtools} 
\usepackage{fixdif} 
\usepackage{leftindex}

\makeatletter
\patchcmd\maketitle
{\uppercasenonmath\shorttitle}
{}
{}{}
\patchcmd\maketitle
{\@nx\MakeUppercase{\the\toks@}}
{\the\toks@}
{}
{}{}
\patchcmd\@settitle{\uppercasenonmath\@title}{\Large}{}{}
\patchcmd\@setauthors
{\MakeUppercase{\authors}}
{\authors}
{}{}
\makeatother

\usepackage{amsmath,amssymb,amsthm, amsfonts}
\usepackage{color}
\usepackage{url}
\usepackage{tikz-cd}
\usepackage{combelow}
\input{mathrsfs.sty}
\usepackage[utf8]{inputenc}
\usepackage[T1]{fontenc}
\usepackage{enumerate}
\newtheorem{theorem}{Theorem}[section]

\newtheorem{definition}{Definition}[section]

\newtheorem{corollary}{Corollary}[section]
\newtheorem{proposition}{Proposition}[section]

\newtheorem{lemma}{Lemma}[section]
\newtheorem{remark}{Remark}[section]
\newtheorem{example}{Example}[section]
\numberwithin{equation}{section}

\newcommand{\nizcphiz}{\mathfrak{c}_{\Phi^*}}

\newcommand{\Dj}{\mbox{\rule[.75ex]{.4em}{.7pt}\kern-.4em D}}
\newcommand{\djm}{\mbox{\kern.1em\rule[1.2ex]{.3em}{.7pt}\kern-.4em d}}

\newcommand{\nuBp}{\nu_{|\BBB|^p}}

\usepackage[colorlinks=true]{hyperref}
\hypersetup{urlcolor=blue, citecolor=red , linkcolor= blue}

\begin{document}
		\title[Pettis integrability of functions with values in separable symmetrically-normed ideals...]{Pettis integrability of functions with values in separable symmetrically-normed ideals and related norm estimates}
		\keywords{Operator-valued functions and measures, symmetrically-normed ideals, Pettis integrability}
		
		\subjclass[2020]{Primary  47B10, 46G10; Secondary 
47A30, 47B15}

		\author[Mihailo Krsti\' c]{Mihailo Krsti\' c}
		\address{Faculty of Mathematics, University of Belgrade, Studentski trg 16, Belgrade, Serbia
		}
		\email{\url{mihailo.krstic@matf.bg.ac.rs}}
        
		\author[Matija Milovi\' c]{Matija Milovi\' c}
		\address{Faculty of Mathematics, University of Belgrade, Studentski trg 16, Belgrade, Serbia
		}
	
	\email{\url{matija.milovic@matf.bg.ac.rs}}

    \author[Stefan Milo\v sevi\' c]{Stefan Milo\v sevi\' c}
		\address{Faculty of Mathematics, University of Belgrade, Studentski trg 16, Belgrade, Serbia
		}
	
	\email{\url{stefan.milosevic.bg.ac.rs}}


		\date{\today}

		\begin{abstract} 
        In this paper we will investigate Pettis integrability of $\ccphin$-valued functions. We will study weakly$^*$ integrable $\BH$-valued functions and establish sufficient conditions for such functions to be Pettis integrable as $\ccphin$-valued functions. In addition, we prove the  inequality  $$\left\|\int_E\AAA^*\BBB\,d\mu \right\|_{\Phi^{(p)}} \leqslant \|\AAA\|_{L^q_s}\cdot\left\|\sqrt[p]{\int_E|\BBB|^p\,d\mu}\right\|_{\Phi^{(p)}}, $$
        where $\Phi^{(p)}$ is $p$-modification of the function $\Phi$ and  the functions $\AAA$ and $\BBB$ belong to the suitable spaces of operator-valued functions. Finally, under some additional integrability assumptions on $\BBB$ we provide similar estimates of the Pettis norm.
		\end{abstract}
		
		\maketitle

\section{Introduction}

\subsection{Ideals of compact operators}

Throughout the paper, $\KH$, $\cci$ and $\BH$ will denote the spaces of all finite-dimensional operators, compact linear and bounded linear operators, respectively, acting on an infinite-dimensional separable complex Hilbert space $\HH$. Furthermore, by $\mathfrak{c}_0$ we will denote the Banach space of all complex sequences converging to zero, by $\ell^p$ we will denote the spaces  of $p$-summable sequences for $1\leqslant p<+\iii$, while $\ell^\iii$ denotes the space of all bounded complex sequences.
By $\Phi$ we will denote a \textit{symmetric norming} (s.\,n.) function (see \cite[Section~3, Chapter~3]{GK}). Associated to $\Phi$, we will define
\[
\cphi=
\left\{
a\in\mathfrak{c}_0
\,\,\Big|\,\,
\Phi(a)=
\lim_{n\to\infty}
\Phi(a_1,a_2,\ldots,a_n,0,0,\ldots)
<+\iii
\right\}.
\]
The space $\cphi$, endowed with the norm $\Phi$, is complete and all standard properties of $\Phi$ are naturally extended to it. In particular, the norms defining the spaces $\ell^p$ are examples of s.\,n. functions and will also be denoted by $\ell^p$. Note that s.\,n. functions $\ell^1$ and $\ell^\iii$ are extremal, i.\,e. for every s.\,n. function $\Phi$ holds $\ell^\iii(a)\leqslant\Phi(a)\leqslant\ell^1(a)$ for any complex  sequence $a$.
Two s.\,n. functions $\Phi$ and $\Psi$ are called \textit{equivalent}, denoted by $\Phi\sim\Psi$, whenever there exist constants $c,C>0$ such that
$
c\cdot\Phi(a)\leqslant\Psi(a)\leqslant C\cdot\Phi(a)
$ for every $a\in\cphi$. Due to \cite[Theorem~1.16, (g)]{S} we have $\Phi\sim\Psi$ if and only if $\cphi=\mathfrak{c}_\Psi$.

Every s.\,n. function gives rise to an \textit{symmetrically-normed} (s.\,n.) ideal (for more details see \cite[Chapter III, Section 2]{GK}). If $\Phi\nsim\ell^\iii$, the induced ideal consists entirely of compact operators and will be denoted by $\ccphi$. The corresponding \textit{symmetric norm} is given by
$
\|A\|_\Phi=
\Phi\bigl((s_n(A))_{n=1}^{\infty}\bigr)$ for every $A\in\ccc_\Phi(\HH)$,
where
$
s_1(A)\geqslant s_2(A)\geqslant\cdots
$
are the \textit{singular values} of the compact operator $A$. If $0\leqslant A\leqslant B$ are such that $B$ belongs to some s.\,n. ideal then the operator $A$ belongs to the same ideal, with $s_n(A)\leqslant s_n(B)$ for all $n\in\N$, implying that $\|A\|_\Phi\leqslant\|B\|_\Phi$ for every s.\,n. function $\Phi$ (see \cite[Lemma~II.1.1]{GK} and \cite[III.2.2 on Page 69]{GK}). In the case where $\Phi\sim\ell^\iii$, the theory of singular values of bounded operators allows us to define $\|\cdot\|_\Phi$ for every operator in $\BH$. The space of all bounded operators on $\HH$ endowed with this norm is denoted by $\mathcalb{B}_\Phi(\HH)$. By $\ccphin$ we will denote the closure of $\KH$ within $\ccphi$.  By \cite[Corollary~III.6.1]{GK}, the ideal $\ccphi$ is separable if and only if
$
\ccphi=\ccphin
$. Moreover, due to \cite[Theorem~III.6.2]{GK} every separable s.\,n. ideal is of this form.

For an arbitrary s.\,n. function $\Phi$ and $1\leqslant p<+\infty$, we define its \textit{$p$-modification} by
$
\Phi^{(p)}((z_n)_{n=1}^{\iii})
=
\sqrt[p]{
\Phi\bigl((|z_n|^p)_{n=1}^{\iii}\bigr)
}$, which is an s.\,n. function as well (see \cite[Page~2064]{J09ii}).
The corresponding sequence space $\cphip$ consists of all sequences of complex numbers
$
z=(z_n)_{n=1}^{\iii}
$
such that
$
(|z_n|^p)_{n=1}^{\iii}\in\mathfrak{c}_\Phi.
$
 We note that the norms $\ell^p$ are precisely the $p$-modifications of $\ell^1$. The corresponding operator norm is given by
$\|A\|_{\Phi^{(p)}} = \sqrt[p]{\big\||A|^p\big\|_\Phi}$,
for every $A\in\BH$ such that $|A|^p\in\ccphi$, where $|A|=\sqrt{A^*A}$ is \textit{absolute value} of $A$. In particular, for $1\leqslant p<+\infty$, the \textit{Schatten trace norms}
$
\|A\|_p=
\sqrt[p]{
\sum_{n=1}^{\infty}
s_n^p(A)
}$, defined for
$A\in\ccc_p(\HH)$,
coincide with the $p$-modifications of the trace norm $\|\cdot\|_1$.
We note that for $p>1$ holds $\ccc_1(\HH)\subsetneq\ccc_p(\HH)\subseteq\ccphipn\subseteq\ccphip$, implying that $\ccphipn$ is strictly larger ideal than $\ccj$.

Next, by $\BB$ will be denoted the set of all orthonormal systems in $\HH.$ Also, for $f,g\in\HH,$ we will denote by $f\otimes g^*$ the bounded linear operator (of rank 1) satisfying $(f\otimes g^*)h=\langle h, g\rangle f$ for every $h\in\HH$. Then, we have equalities $\|f\otimes g^*\|=\|f\|\cdot\|g\|$, $\textup{tr}(f\otimes g^*)=\scal{f}{g}$ and $|f\otimes g^*|
=
\frac{\|f\|}{\|g\|}\,g\otimes g^*$, for $g\neq 0$. Also, for $f,g,h,k\in\HH$ holds $(f\otimes g^*)(h\otimes k^*)=\scal{h}{g}(f\otimes k^*)$. Let us recall (see \cite[Section 2, Chapter II]{GK}) that for every $A\in\cci$ there exist elements  $e,f\in\BB$ such that $A=\sum_{n=1}^{\iii}s_n(A)\,e_n\otimes f_n^*$ (we will refer to this sum as the \textit{Schmidt expansion} of the compact operator $A$). The o.\,n.\,s. $(e_n)_{n=1}^\iii$ is  basis for closure of image of $A$ i.\,e. for $\overline{\mathcal{R}(A)}$, while o.\,n.\,s. $(f_n)_{n=1}^\iii$ is a basis for orthogonal complement of kernel of $A$, i.\,e. for $\mathcal{N}(A)^\perp=\overline{\mathcal{R}(A^*)}=\overline{\mathcal{R}(|A|)}$. 

The following result provides a useful characterization for an operator $A\in\BH$ to belong to the ideal $\ccphi$.

\begin{theorem}\cite[Prop. 2.6]{S}\label{Simon}
For every $A\in\BH$ we have $A\in\ccphi$ if and only if for every $e,f\in\BB$ holds $(\langle Ae_n,f_n\rangle)_{n=1}^\iii\in\cphi$, in which case 
\begin{equation}\label{normaphi}
\|A\|_{\Phi}=\sup_{e,f\in\BB}\Phi((\langle Ae_n,f_n\rangle)_{n=1}^\iii),
\end{equation}
with the supremum being attained on the systems of vectors in the Schmidt expansion of $A$.
\end{theorem}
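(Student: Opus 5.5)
The plan is to prove the two inequalities behind \eqref{normaphi} separately, and to obtain the membership criterion along the way.

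\textbf{Upper bound.} Let $A\in\ccphi$ with Schmidt expansion $A=\sum_k s_k(A)\,g_k\otimes h_k^*$, and fix $e,f\in\BB$. Then
\[
\langle Ae_n,f_n\rangle=\sum_k s_k(A)\,\langle e_n,h_k\rangle\langle g_k,f_n\rangle .
\]
Put $u_{nk}=\langle e_n,h_k\rangle$ and $v_{nk}=\langle g_k,f_n\rangle$. By Bessel's inequality both matrices have rows and columns of $\ell^2$-norm at most $1$. Hence the matrix $m_{nk}=u_{nk}v_{nk}$ satisfies $\sum_k|m_{nk}|\leqslant1$ and $\sum_n|m_{nk}|\leqslant1$ by Cauchy--Schwarz, i.e.\ it is doubly substochastic. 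I will use the standard fact (Gohberg--Krein, the majorization lemma behind $\Phi$) that a doubly substochastic matrix applied to a sequence $s\in\cphi$ gives a sequence $y$ with $\sum_{n\le N}y_n^*\leqslant\sum_{n\le N}s_n$ for all $N$. By Ky Fan dominance this yields $\Phi(y)\leqslant\Phi(s)$, and I apply it to $|\langle Ae_n,f_n\rangle|\leqslant\sum_k|m_{nk}|s_k$. First do this for finite truncations using $\Phi(a_1,\dots,a_N,0,\dots)$, then pass to the limit. This gives $\Phi((\langle Ae_n,f_n\rangle)_n)\leqslant\|A\|_\Phi$. Choosing $e_n=h_n$ and $f_n=g_n$ gives $\langle Ae_n,f_n\rangle=s_n(A)$, so the supremum is attained there. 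This proves \eqref{normaphi} and the forward implication.

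\textbf{Converse.} Assume $(\langle Ae_n,f_n\rangle)_n\in\cphi$ for all $e,f\in\BB$. The step I expect to be the main obstacle is showing that $A$ is compact, since only then do singular values and a Schmidt expansion exist.

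First I will show that $\langle Ae_n,f_n\rangle\to0$ for all o.n.s. forces compactness. Suppose $A$ is not compact. Then there is an o.n.s. $(e_n)$ with $\|Ae_n\|\geqslant\varepsilon$. Passing to a subsequence (weak convergence $e_n\to0$ together with a diagonal/perturbation argument), I can make $(Ae_n/\|Ae_n\|)$ almost orthonormal. A small Gram--Schmidt correction then gives an o.n.s. $(f_n)$ with $|\langle Ae_n,f_n\rangle|\geqslant\varepsilon/2$. This contradicts $\cphi\subset\mathfrak{c}_0$.

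With $A$ compact, take its Schmidt expansion systems. The hypothesis gives $(s_n(A))_n\in\cphi$, which by definition means $A\in\ccphi$. The upper-bound paragraph then applies and gives the norm formula.

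The delicate points are the almost-orthogonal selection in the compactness step and justifying the majorization (Ky Fan) step for infinite sequences. I plan to handle the latter by truncation together with the monotone limit in the definition of $\cphi$.
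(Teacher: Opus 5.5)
The paper does not prove this statement; it is quoted from Simon \cite[Prop.~2.6]{S}, so there is no internal argument to compare against. Your proof is correct, and it is the standard argument behind that reference. The doubly substochastic estimate you extract from the Schmidt expansion is exactly the Ky Fan inequality $\sum_{n\in S}|\langle Ae_n,f_n\rangle|\leqslant\sum_{k\leqslant |S|}s_k(A)$ for finite $S\subset\N$. The dominance property of s.\,n.\ functions, applied to truncations, then gives $\Phi((\langle Ae_n,f_n\rangle)_{n=1}^\iii)\leqslant\|A\|_\Phi$.

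Two small points should be stated explicitly. First, in the forward direction, membership in $\cphi$ also requires $(\langle Ae_n,f_n\rangle)_{n=1}^\iii\in\mathfrak{c}_0$. This holds because $|\langle Ae_n,f_n\rangle|\leqslant\|Ae_n\|\to0$ for compact $A$. Second, for finite-rank $A$ the Schmidt systems must be completed to infinite o.\,n.\,s.\ (taking the additional $e_n$ in $\mathcal{N}(A)$), which only adds zero entries.

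Your compactness step is also sound, and it can be written without perturbation estimates. One way is to choose $n_1<n_2<\cdots$ inductively with $\|P_jAe_{n_j}\|<\varepsilon/2$, where $P_j$ is the orthogonal projection onto $\mathrm{span}\{f_1,\dots,f_{j-1}\}$. This is possible because $P_j$ has finite rank and $Ae_n\to0$ weakly. Then put $f_j=(I-P_j)Ae_{n_j}/\|(I-P_j)Ae_{n_j}\|$, which gives $\langle Ae_{n_j},f_j\rangle=\|(I-P_j)Ae_{n_j}\|>\varepsilon/2$.

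A more direct way: if $A$ is not compact, then for some $\varepsilon>0$ the spectral projection of $|A|$ for $[\varepsilon,+\iii)$ has infinite rank. Its range lies in $\overline{\mathcal{R}(|A|)}$, where the partial isometry $U$ of the polar decomposition $A=U|A|$ is isometric. For an o.\,n.\,s.\ $(e_n)_{n=1}^\iii$ in that range, $f_n=Ue_n$ is an o.\,n.\,s.\ and $\langle Ae_n,f_n\rangle=\langle|A|e_n,e_n\rangle\geqslant\varepsilon$.

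This is where having two independent systems $e,f$ pays off. In the paper's diagonal criterion (Proposition~\ref{KriterijumPhiNula}), the choice $f_n=Ue_n$ is not available, and compactness is taken from Ringrose's theorem instead.
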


For a given s.\,n. function $\Phi$, we will denote by $\Phi^*$ its adjoint s.\,n. function (see \cite[Section~11, Chapter III]{GK}). The following \textit{Abstract Hölder inequality} holds
\begin{equation}\label{abstraktnihelder}
    \sumn |a_nb_n|\leqslant\Phi(a)\cdot\Phi^*(b),\qquad a\in\cphi,\,\, b\in\nizcphiz.
\end{equation}
This inequality is essential for the following duality theorem (see \cite[Theorem~3.2]{S}). For the convenience of the reader, we provide its full statement.

\begin{theorem}\label{duali}

 Let $\Phi$ and $\Phi^*$ be mutually adjoint s.n. functions. Then, for every $B\in\ccphiz,$ the map $A\mapsto\tr(AB)$ is a bounded linear functional on $\ccphi$ with the norm $\|B\|_{\Phi^*}.$ In the case where $\Phi\nsim\ell^1$, every bounded linear functional on $\ccphin$ is of this form, i.\,e. $(\ccphin)^*\cong\ccphiz$ (isometrically). If $\Phi\sim\ell^1,$ we have $(\ccphi)^*\cong\mathcalb{B}_{\Phi^*}(\mathcal{H})$ in the same manner.
\end{theorem}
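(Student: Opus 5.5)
The first assertion is that for $B\in\ccphiz$ the functional $A\mapsto\tr(AB)$ is bounded on $\ccphi$ with norm exactly $\|B\|_{\Phi^*}$. I will use the Schmidt expansion of $A$ together with the abstract Hölder inequality \eqref{abstraktnihelder}; the reverse assertion (every functional is of this form) will be obtained by representing a functional through its values on rank-one operators.

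\textbf{Upper bound.} Write $A=\sum_n s_n(A)\,e_n\otimes f_n^*$. Then
$$\tr(AB)=\sum_n s_n(A)\,\langle Be_n,f_n\rangle,$$
with the series converging absolutely. Indeed, $(\langle Be_n,f_n\rangle)_n\in\nizcphiz$ has $\Phi^*$-norm at most $\|B\|_{\Phi^*}$ by Theorem~\ref{Simon} applied to $\Phi^*$. Hence \eqref{abstraktnihelder} gives $|\tr(AB)|\leqslant \|A\|_\Phi\|B\|_{\Phi^*}$.

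\textbf{Lower bound.} Take the Schmidt expansion $B=\sum_n s_n(B)\,g_n\otimes h_n^*$. For a finitely supported $a\geqslant 0$, set $A_a=\sum_n a_n\, h_n\otimes g_n^*$. Then $\|A_a\|_\Phi=\Phi(a)$ and $\tr(A_aB)=\sum_n a_n s_n(B)$. Taking the supremum over such $a$ with $\Phi(a)\leqslant1$ recovers $\Phi^*(s(B))=\|B\|_{\Phi^*}$, since this supremum is precisely the definition of the adjoint function (it suffices to test against finitely supported sequences). Because these $A_a$ are finite rank, they lie in $\ccphin$, so the norm is already attained on the separable part.

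\textbf{Surjectivity for $\Phi\nsim\ell^1$.} Let $\omega\in(\ccphin)^*$. The map $(f,g)\mapsto\omega(f\otimes g^*)$ is a bounded sesquilinear form, since $\|f\otimes g^*\|_\Phi=\Phi(1,0,\ldots)\,\|f\|\|g\|$. By the Riesz representation theorem there is $B\in\BH$ with $\omega(f\otimes g^*)=\langle Bf,g\rangle=\tr\bigl((f\otimes g^*)B\bigr)$. By linearity $\omega(K)=\tr(KB)$ for all $K\in\KH$.

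It remains to show $B\in\ccphiz$ with $\|B\|_{\Phi^*}\leqslant\|\omega\|$. For finite-rank $P$, the lower bound argument applied to the finite-rank operator $PB$ gives
$$\|PB\|_{\Phi^*}=\sup\{|\tr(KB')| : K\in\KH,\ \|K\|_\Phi\leqslant1\}\leqslant\|\omega\|,$$
where $B'=PB$ and the estimate uses $\tr(KPB)=\omega(KP)$ with $\|KP\|_\Phi\leqslant\|K\|_\Phi$. Now let $P_N$ increase strongly to $I$. Then $s_k(P_NB)\to s_k(B)$ (by lower semicontinuity, or because $s_k(P_NB)\uparrow s_k(B)$ in the sense that $|P_NB|^2=B^*P_NB\uparrow B^*B$). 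Fatou-type lower semicontinuity of $\Phi^*$ on finite sections then yields $\Phi^*(s(B))\leqslant\|\omega\|$.

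The main obstacle is showing that $B$ is compact, i.e.\ that $s(B)\in\mathfrak c_0$, so that $B\in\ccphiz$ rather than merely $\mathcalb B_{\Phi^*}$. Here I will use $\Phi\nsim\ell^1$, which is equivalent to $\Phi^*\nsim\ell^\iii$. That is, $\Phi^*(1,\ldots,1,0,\ldots)$ with $n$ ones tends to infinity, so a bounded $\Phi^*$-norm forces $s_n(B)\to0$. Finally, $\omega$ and $\tr(\cdot B)$ agree on the dense set $\KH$ and both are bounded, so they coincide on $\ccphin$; isometry follows from the first part.

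\textbf{The case $\Phi\sim\ell^1$.} The same construction applies, but now $\Phi^*\sim\ell^\iii$ and no compactness is forced. This yields $B\in\mathcalb B_{\Phi^*}(\HH)$, as stated.
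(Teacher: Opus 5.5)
The paper does not prove this theorem. It only reproduces the statement from Simon \cite[Theorem~3.2]{S}, so your proposal is an independent, self-contained proof. Its architecture is sound and follows the classical lines:
\begin{itemize}
\item H\"older's inequality plus Theorem~\ref{Simon} applied to $B$ give $|\tr(AB)|\leqslant\|A\|_\Phi\|B\|_{\Phi^*}$.
\item The test operators $A_a$, built from the Schmidt vectors of $B$, give the reverse inequality. They also show the norm is attained on $\KH$, so the isometry already holds on $\ccphin$.
\item For surjectivity you combine Riesz representation of $(f,g)\mapsto\omega(f\otimes g^*)$, the bound $\|P_NB\|_{\Phi^*}\leqslant\|\omega\|$, lower semicontinuity of $\Phi^*$ along finite sections, and the criterion $\Phi^*(1,\dots,1,0,\dots)\to\infty$ (equivalent to $\Phi^*\nsim\ell^\iii$, i.e.\ $\Phi\nsim\ell^1$) to force $s_n(B)\to0$.
\end{itemize}
Compared with the bare citation, your route shows exactly where $\Phi\nsim\ell^1$ enters (compactness of $B$) and where separability enters (density of $\KH$).

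Several standard facts are used silently and should be made explicit.

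\textbf{(i) Trace class of $AB$.} The identity $\tr(AB)=\sum_n s_n(A)\langle Be_n,f_n\rangle$ presupposes $AB\in\ccj$. Absolute convergence of that one series does not prove it. Your own tools fix this:
\begin{itemize}
\item Write $AB=\sum_n s_n(AB)\,u_n\otimes v_n^*$ and set $C_N=\sum_{n\leqslant N}v_n\otimes u_n^*$. Then $ABC_N$ has finite rank.
\item Compute its trace in an o.n.b.\ extending $(e_n)$ by a basis of $\mathcal{N}(A^*)$, using $A^*e_n=s_n(A)f_n$. This gives $\sum_n s_n(A)\langle BC_Ne_n,f_n\rangle$.
\item H\"older then yields $\sum_{n\leqslant N}s_n(AB)\leqslant\|A\|_\Phi\|B\|_{\Phi^*}$, so $AB\in\ccj$.
\item The same basis produces your formula for $\tr(AB)$.
\end{itemize}

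\textbf{(ii) Singular values before compactness.} In the surjectivity step $B$ is not yet known to be compact, so $s_k(B)$ must be read as the min--max values of $|B|$. What you need is $s_k(B)\leqslant\liminf_N s_k(P_NB)$. The monotonicity $B^*P_NB\uparrow B^*B$ gives only the opposite inequality. Use the max--min formula instead, together with uniform convergence of $\langle B^*P_NBx,x\rangle$ on unit spheres of finite-dimensional subspaces. Also take $P$ to be an orthogonal projection, so that $\|KP\|_\Phi\leqslant\|K\|_\Phi$.

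\textbf{(iii) The case $\Phi\sim\ell^1$.} Saying ``the same construction applies'' hides that $B\in\mathcalb{B}_{\Phi^*}(\HH)$ need not be compact. Two adjustments are needed:
\begin{itemize}
\item The upper bound requires $\Phi^*((\langle Be_n,f_n\rangle)_n)\leqslant\Phi^*(s(B))$ for bounded $B$. This can be obtained from finite compressions of $B$.
\item The lower bound must come from your $P_NB$ argument, not from a Schmidt expansion of $B$.
\end{itemize}
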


In the case where $\Phi\nsim\ell^1$, for every $A\in\ccphi$ and $X\in\ccphiz$ we have  $AX\in\ccj$ and therefore $\tr(AX) = \sum_{m=1}^\infty \scal{AX g_m}{g_m}$ for an arbitrary o.\,n.\,b. $(g_m)_{m=1}^\iii$. Let $X=\sumn s_n(X)\,e_n\otimes f_n^*$ be its Schmidt expansion. By completing (if necessary) $(f_n)_{n=1}^\iii$ to the full o.\,n.\,b. $(g_m)_{m=1}^\iii$ of $\HH$ with the o.\,n.\,b. of $\mathcal{N}(X)$, we get the following formula
\begin{equation}\label{trAX}
\tr(AX) = \sum_{m=1}^\infty \scal{AX g_m}{g_m} = \sum_{n=1}^\infty \scal{AX f_n}{f_n} = \sum_{n=1}^\iii s_n(X)\scal{A e_n}{f_n},
\end{equation}
that will be used several times throughout the paper.

\subsection{Vector Measures and Integration in Banach spaces}

Throughout the paper, $(\Omega,\M,\mu)$ denotes a measurable space endowed with a complete measure $\mu$.  We will usually suppress $\M$ from the notation when there is no risk of ambiguity. In the special case $\Omega=\N$, the measure $\mu$ is assumed to be the counting measure on the partitive set $\mathcal{P}(\N)$ and will also be suppressed from the notation.

We next recall several standard notions and notation from the theory of vector measures; additional details can be found in the monograph \cite{DU}. 
A mapping $\nu\colon\M\to X$, where $X$ is a Banach space, is called a \textit{vector measure} provided that $\nu(\emptyset)=0$ and
$
\nu(A\sqcup B)=\nu(A)+\nu(B)
$
for every pair of disjoint sets $A,B\in\M$.
By a straightforward induction argument, each vector measure is finitely additive. 
If, moreover, for every sequence $(E_n)_{n=1}^{\iii}$ of pairwise disjoint sets in $\M$ the equality
$
\nu\left(\bigsqcup_{n=1}^{\iii}E_n\right)=\sum_{n=1}^{\infty}\nu(E_n)$
holds in the norm topology of $X$, then $\nu$ is said to be \textit{countably additive}. Furthermore, we say that $\nu$ is \textit{$\mu$-continuous}, and write $\nu\ll\mu$, if for every $\varepsilon>0$ there exists $\delta>0$ such that for each $E\in\M$ satisfying $\mu(E)<\delta$, one has
$
\|\nu(E)\|_X<\varepsilon$.
It is immediate that every $\mu$-continuous vector measure vanishes on sets of $\mu$-measure zero.

A function $\varphi\colon\Omega\to X$ is called \textit{strongly $\mu$-measurable} if there exists a sequence $(s_n)_{n=1}^{\infty}$ of $\mu$-\textit{simple functions} satisfying $\lim_{n\to\infty}\|s_n(t)-\varphi(t)\|_X=0$ for $\mu$-almost all $t\in\Omega$. Here, a $\mu$-simple function is a function of the form $s=\sum_{k=1}^{m}x_k\chi_{A_k}$, where $(A_k)_{k=1}^{m}$ are pairwise disjoint sets in $\M$ of finite $\mu$-measure and $(x_k)_{k=1}^{m}$ is a finite family in $X$. In the scalar-valued case $X=\C$, we simply refer to such functions as $\mu$-measurable. It follows that if $\varphi$ is strongly $\mu$-measurable, then the mapping $t\mapsto\|\varphi(t)\|_X$ is $\mu$-measurable. 
A function $\varphi\colon\Omega\to X$ is called \textit{weakly $\mu$-measurable} whenever for every $x^\ast\in X^\ast$ the scalar-valued function $ t\mapsto\langle\varphi(t),x^\ast\rangle$ is $\mu$-measurable.  
If in addition these mappings belong to $L^1(\Omega, \mu)$, we will say that $\varphi$ is \textit{weakly integrable}. 
Moreover, if for every $E\in\M$ there exists $\varphi_E\in X$ such that $\scal{\varphi_E}{x^*}=\int_E\scal{\varphi(t)}{x^*}\,d\mu(t)$ for every $x^*\in X^*$ we will say that $\varphi$ is \textit{Pettis integrable}. The space of Pettis integrable functions will be denoted by $\PP(\Omega, \mu, X)$, with the norm given by $\|\varphi\|_{\mathbb{P}}=\sup_{\|x^*\|_{X^*}=1}\int_\Omega|\scal{\varphi(t)}{x^*}|\,d\mu(t)$. We will identify $\varphi_1$ and $\varphi_2$ in $\PP(\Omega, \mu, X)$  if for every $x^*\in X^*$ the equality $\scal{\varphi_1(t)}{x^*} = \scal{\varphi_2(t)}{x^*}$ holds for $\mu$-almost all $t\in\WWW$. If $X$ is separable, previous identification reduces to $\varphi_1(t) = \varphi_2(t)$ for $\mu$-almost every $t\in\WWW$ (see \cite[Corollary~7, Page 48]{DU}). Finally, we note that for every $\varphi\in\PP(\Omega, \mu, X)$ holds $\|\varphi\|_{\mathbb{P}}\geqslant\|\varphi_E\|_X$ for all $E\in\M$.

 For $1\leqslant p<+\infty$, a strongly $\mu$-measurable function $\varphi\colon\Omega\to X$ is called \textit{Bochner $p$-integrable} if $\int_\Omega\|\varphi(t)\|_X^p\,d\mu(t)<+\infty$. The space of all such functions, identified 
 $\mu$-almost everywhere, is denoted by $L^p(\Omega,\mu,X)$. It is a Banach space  endowed with the natural norm $\|\varphi\|_{L^p}:=\sqrt[p]{\int_\Omega\|\varphi(t)\|_X^p\,d\mu(t)}$. The functions in $L^1(\Omega,\mu,X)$ are called \textit{Bochner integrable}. They have defined \textit{ Bochner integral} (for the construction of the Bochner integral see \cite[Definition~1.2.1]{ABS} and \cite[Proposition~1.2.2]{ABS}) and 
the inequality $\left\|\varphi_E\right\|_X\leqslant\int_E\|\varphi(t)\|_X\,d\mu(t)$ holds for every $E\in\M$. Every Bochner integrable function is Pettis integrable, and the corresponding integrals coincide.

We will say that $\AAA\colon\WWW\to\BH$ is weakly$^*$ $\mu$-measurable (weakly$^*$ integrable) if for every $f\in\HH$ the scalar function $t\mapsto\scal{\AAt f}{f}$ is $\mu$-measurable (integrable). Under these assumptions, the scalar functions $t\to\tr(\AAt X)$ are $\mu$-measurable (integrable) for every $X\in\ccj$ (see \cite[Lemma~1.1]{MMS}). Such functions are also called \textit{Gelfand integrable}. Whenever $\AAA$ is Gelfand integrable, there exists an operator $\int_E\AAA\,d\mu\in\BH$ satisfying
$\scal{\left( \int_E \AAA d\mu \right) f}{g}= 
\int_{E} \scal{\AAA_tf}{g}d\mu(t)$ for all $ E\in\M$ and $f,g\in\HH$. For more information on weak$^*$ integration see \cite[Page~52-53]{DU} and introduction of \cite{J05}. The corresponding vector space (with the identification $\mu$-almost everywhere, see \cite[Lemma~2.14]{MAMK})  will be denoted by $ \GG(\WWW,\mu,\BH)$. It is a normed space with the norm defined by (see \cite[Proposition~1.8]{Matija})
\begin{equation*}\label{Gnorm}
\| \AAA \|_{\GG} =  \sup_{\| X \|_{\ccj} =1 } \int_\Omega | \tr (\AAA_t X) | \, d\mu(t) = \sup_{\| f \| = \| g \| =1 } \int_\Omega | \langle \AAA_t f, g \rangle | \, d\mu(t),
\end{equation*}
and it is not complete in the general case, see \cite[Theorem 1.4]{MK}. Throughout the paper, by $\int_E\AAA\,d\mu$ we denote the weak$^*$ operator valued integral, unless stated otherwise.

Every weakly$^*$ integrable $\AAA\colon\WWW\to\BH$ induces a $\BH$-valued measure $\nu_\AAA\colon\M\to\BH$, defined by
$\nu_\AAA(E)=\int_E\AAA\,d\mu$ for all $E\in\M$.
Every such measure is weakly$^*$ countably additive, but does not need to be countably additive in the operator norm, see \cite[Example 2.13]{MAMK} and \cite[Theorem 1, Page 10]{DU}. 
Moreover, if we have $\AAA_t\geqslant0$ for $\mu$-almost all $t\in\Omega$ then $0\leqslant\nu_\AAA(E)\leqslant\nu_\AAA(F)$ whenever $E,F\in\M$ and $E\subseteq F$.

The following space was introduced in \cite{BJN} in the setting of Banach space ope\-rators and under the additional assumption that the corresponding norm is finite. For more details see the text following \cite[Proposition~3.1]{BJN}.

\begin{definition}
Let $1\leqslant p<+\iii$ and let $\AAA \colon \WWW \rightarrow \BH$ be a weakly$^*$ $\mu$-measurable function. 
We say $\AAA$ is strongly $p$-integrable if $\int_\WWW \|\AAA_t f \|^p\,d\mu(t) < +\iii$ for all $f \in \HH$, and we denote the set of all such functions by $L^p_s(\Omega, \mu, \BH)$, with the customary identification of $\mu$-almost everywhere equal functions.
\end{definition}

Note that, unlike in the general situation in \cite{BJN}, we do not need to assume that the mappings $t\mapsto\AAt f$ are $\mu$-measurable for every $f\in\HH$. Indeed, this follows from weak$^*$ $\mu$-measurability of $\AAA$ and separability of $\HH$, due to the well known Pettis Measurability Theorem, see \cite[Theorem 1.\,1.\,20]{ABS}. It follows that the mapping $t\mapsto\|\AAt f\|$ is $\mu$-measurable, for every $f\in\HH$. 

The following result provides a natural way to norm the spaces $L^p_s(\Omega, \mu,\BH)$, due to Closed Graph Theorem.

\begin{theorem}
For $1\leqslant p<+\iii, L^p_s(\WWW,\mu,\BH)$ is a vector space. Moreover, if
$\AAA\in L^p_s(\WWW,\mu,\BH)$, then the expression
$\|\AAA\|_{L^p_s}:=\sup_{\|f\|=1}\sqrt[p]{\int_\WWW \|\AAA_t f\|^p\,d\mu(t)}$ is finite and defines a norm on the vector space $L^p_s(\WWW,\mu,\BH)$.
\end{theorem}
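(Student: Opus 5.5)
First I check that $L^p_s(\WWW,\mu,\BH)$ is a vector space. Scalar multiples are clear. For a sum, fix $f\in\HH$. The map $t\mapsto(\AAA_t+\BBB_t)f$ is strongly $\mu$-measurable by the Pettis measurability remark preceding the statement. Moreover $\|(\AAA_t+\BBB_t)f\|^p\leqslant 2^{p-1}(\|\AAA_tf\|^p+\|\BBB_tf\|^p)$, so the sum is again strongly $p$-integrable. Weak$^*$ measurability of $\AAA+\BBB$ is immediate. Once finiteness of $\|\cdot\|_{L^p_s}$ is known, the norm axioms follow quickly. Homogeneity is clear. The triangle inequality follows from Minkowski's inequality in $L^p(\WWW,\mu)$ applied to $t\mapsto\|\AAA_tf\|$ and $t\mapsto\|\BBB_tf\|$ for each fixed unit vector $f$, after which we take the supremum. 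Definiteness also holds: if $\|\AAA\|_{L^p_s}=0$, then for each $f$ in a countable dense subset of $\HH$ we get $\AAA_tf=0$ for $\mu$-almost all $t$. Taking a countable union of null sets and using continuity of $\AAA_t$ gives $\AAA_t=0$ $\mu$-a.e., which is the identification built into the space.

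The main step is finiteness of the supremum, which I obtain from the Closed Graph Theorem. Fix $\AAA\in L^p_s$ and define the linear operator
\[
T\colon\HH\to L^p(\WWW,\mu,\HH),\qquad (Tf)(t)=\AAA_tf .
\]
It is well defined because $t\mapsto\AAA_tf$ is strongly measurable and $\int_\WWW\|\AAA_tf\|^p\,d\mu<+\iii$. Both spaces are Banach, so it suffices to show that the graph of $T$ is closed. Suppose $f_n\to f$ in $\HH$ and $Tf_n\to\varphi$ in $L^p(\WWW,\mu,\HH)$. Passing to a subsequence, $\AAA_tf_{n_k}\to\varphi(t)$ for $\mu$-almost all $t$. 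Since each $\AAA_t$ is bounded, we also have $\AAA_tf_{n_k}\to\AAA_tf$ for every $t$. Hence $\varphi=Tf$ $\mu$-a.e., and $T$ is bounded.

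It then follows that
\[
\sup_{\|f\|=1}\Bigl(\int_\WWW\|\AAA_tf\|^p\,d\mu(t)\Bigr)^{1/p}=\|T\|<+\iii .
\]
The only delicate point is the pointwise a.e. subsequence argument, which relies on the standard fact that convergence in $L^p$ of Bochner functions yields a subsequence converging almost everywhere. If one wishes to avoid vector-valued $L^p$, the same argument works with the scalar family $t\mapsto\|\AAA_tf\|$. In that version one instead proves that the convex, lower semicontinuous seminorm $f\mapsto\|\,\|\AAA_\cdot f\|\,\|_{L^p}$, which is finite everywhere, is bounded by a Baire category argument.
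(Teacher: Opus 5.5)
Your proposal is correct and takes the same route as the paper. The paper gives no written proof beyond attributing finiteness of $\|\cdot\|_{L^p_s}$ to the Closed Graph Theorem. Your operator $T\colon\HH\to L^p(\WWW,\mu,\HH)$, $(Tf)(t)=\AAA_tf$, together with the a.e.-convergent-subsequence argument for closedness of its graph, is the standard way to fill in that citation. The vector-space, Minkowski and definiteness checks are also sound.
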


If $\WWW=\N$ and $\mu$ is the counting measure on $\M=\mathcal{P}(\N),$ the corresponding space of strongly $p$-integrable functions will be denoted by $\ell^p_s(\BH)$. These spaces will play an important role in norm estimates that include operator H\"older inequalities.

The main focus of this paper will be proving Pettis integrability of functions with values in certain  ideals of operators. In the case of the ideal of compact operators we have the following two theorems, that are direct consequences of
\cite[Theorem 2.3]{MMS}.
\begin{theorem}\label{TPetisA}
Let $(\WWW,\M,\mu)$ be a finite measure space and $\AAA\colon\WWW\rightarrow\BH$ be weakly$^*$ integrable o.\,v. function, such that $\AAA_t\in\cci$ for every $t\in\WWW$. If the induced vector measure $\nu_\AAA$ is $\mu$-continuous then $\AAA\in\PP(\WWW,\mu,\cci)$.
\end{theorem}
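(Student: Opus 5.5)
The plan is to derive the statement from \cite[Theorem~2.3]{MMS}, which we take to be a general criterion for Pettis integrability in $\cci$: a weakly integrable $\cci$-valued function whose induced vector measure is countably additive, or equivalently $\mu$-continuous when $\mu$ is finite, is Pettis integrable. Accordingly, the proof reduces to two checks. First, $\AAA$ is weakly integrable as a $\cci$-valued function. Second, for every $E\in\M$ the weak$^*$ integral $\int_E\AAA\,d\mu$ actually lies in $\cci$ and represents the Pettis integral.

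\textbf{Step 1 (weak integrability).} By Theorem~\ref{duali} with $\Phi=\ell^\iii$, we have $(\cci)^*\cong\ccj$ via $A\mapsto\tr(AX)$. For each $X\in\ccj$, the function $t\mapsto\tr(\AAt X)$ is $\mu$-integrable by \cite[Lemma~1.1]{MMS}, since $\AAA$ is weakly$^*$ integrable. Hence $\AAA\colon\WWW\to\cci$ is weakly integrable.

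\textbf{Step 2 (integrals lie in $\cci$).} This is the main obstacle, and it is where $\mu$-continuity is used. Fix $E\in\M$.
\begin{itemize}
\item Because $\AAA$ is weakly $\mu$-measurable and $\HH$ is separable, $\AAA$ is essentially separably valued in $\cci$. By the Pettis measurability theorem it is therefore strongly $\mu$-measurable.
\item The function $t\mapsto\|\AAt\|$ need not be integrable, so we localize. Let $E_n=\{t\in E:\|\AAt\|\leqslant n\}$. On $E_n$ the function $\AAA$ is Bochner integrable, since $\mu$ is finite. Thus $\nu_\AAA(E_n)=\int_{E_n}\AAA\,d\mu\in\cci$, and the Bochner integral coincides with the weak$^*$ one.
\item Since $\mu(E\setminus E_n)\to0$, $\mu$-continuity of $\nu_\AAA$ gives $\|\nu_\AAA(E)-\nu_\AAA(E_n)\|=\|\nu_\AAA(E\setminus E_n)\|\to0$.
\item As $\cci$ is norm closed in $\BH$, it follows that $\nu_\AAA(E)\in\cci$.
\end{itemize}

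\textbf{Step 3 (identification).} For $X\in\ccj$, the weak$^*$ integral satisfies $\tr\bigl(\nu_\AAA(E)X\bigr)=\int_E\tr(\AAt X)\,d\mu(t)$. This follows from the defining identity on rank-one operators $f\otimes g^*$, extended by linearity and by dominated convergence using \eqref{trAX}. Consequently $\varphi_E:=\nu_\AAA(E)$ is the Pettis integral over $E$, and $\AAA\in\PP(\WWW,\mu,\cci)$.

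If \cite[Theorem~2.3]{MMS} is stated exactly in this form, Steps 1–2 merely verify its hypotheses and the result follows at once.
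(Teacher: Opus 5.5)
Your proof is correct, but it does not follow the paper's route. The paper gives no argument; it records the statement as a direct consequence of \cite[Theorem~2.3]{MMS}. Your Steps~1--3 never actually use that citation, so your guess at its exact wording does not matter: they form a self-contained proof.

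That proof is the finite-measure, $\Phi=\ell^\iii$ specialization of the localization scheme the paper uses for Theorem~\ref{boljaVerzija}. There, $\AAA$ is made Bochner integrable on the sets $\WWW_n\cap F_k$, and countable additivity of $\nu_\AAA$ passes to the limit. You instead use $E_n=\{t\in E:\|\AAt\|\leqslant n\}$ directly, $\mu$-continuity in place of countable additivity, and norm-closedness of $\cci$ to conclude $\nu_\AAA(E)\in\cci$. The citation buys brevity. Your version buys independence from the precise form of \cite[Theorem~2.3]{MMS}, and it shows that only the elementary implication $\mu(E\setminus E_n)\to0\Rightarrow\|\nu_\AAA(E\setminus E_n)\|\to0$ is needed.

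One small tightening in Step~3. Dominated convergence on the Schmidt expansion of $X$ needs the majorant $\sum_n s_n(X)|\scal{\AAt e_n}{f_n}|$, and its integrability rests on $\|\AAA\|_\GG<+\iii$, which you do not mention. It is cleaner to reuse Step~2. For the Bochner pieces, $\tr(\nu_\AAA(E_n)X)=\int_{E_n}\tr(\AAt X)\,d\mu(t)$. The left side converges because $|\tr(\nu_\AAA(E\setminus E_n)X)|\leqslant\|\nu_\AAA(E\setminus E_n)\|\,\|X\|_1$. The right side converges by dominated convergence, with the integrable majorant $|\tr(\AAt X)|$ from Step~1.
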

The second theorem exploits the positivity condition and does not rely on the finiteness assumption of measure $\mu$.
\begin{theorem}\label{TPetisB}
Let $\AAA\colon\WWW\rightarrow\BH$ be weakly$^*$ integrable o.\,v. function, such that $0\leqslant\AAA_t\in\cci$ for every $t\in\WWW$ and $\int_\WWW\AAA\,d\mu\in\cci$. Then, we have that $\AAA\in\PP(\WWW,\mu,\cci)$.
\end{theorem}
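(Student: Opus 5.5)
Since $\cci$ is separable, we argue through the dual pairing: every functional on $\cci$ is $X\mapsto\tr(XY)$ with $Y\in\ccj$. Weak$^*$ integrability of $\AAA$ makes $t\mapsto\tr(\AAt Y)$ integrable for every $Y\in\ccj$, by \cite[Lemma~1.1]{MMS}. So $\AAA$ is weakly integrable as a $\cci$-valued function. For every $E\in\M$ the operator $\nu_\AAA(E)=\int_E\AAA\,d\mu\in\BH$ satisfies $\tr(\nu_\AAA(E)Y)=\int_E\tr(\AAt Y)\,d\mu(t)$ for all $Y\in\ccj$. This holds for rank-one $Y$ by definition, and extends to all of $\ccj$ via the Schmidt expansion of $Y$ and dominated convergence, using $\int_\WWW|\tr(\AAt Y)|\,d\mu\leqslant\|\AAA\|_{\GG}\|Y\|_1$. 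Pettis integrability in $\cci$ therefore reduces to showing that $\nu_\AAA(E)\in\cci$ for every $E\in\M$.

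The key step is a positivity-plus-monotonicity argument. Since $\AAt\geqslant0$ for all $t$, the remark preceding the statement gives $0\leqslant\nu_\AAA(E)\leqslant\nu_\AAA(\WWW)=\int_\WWW\AAA\,d\mu$ for every $E\in\M$. By hypothesis $\int_\WWW\AAA\,d\mu\in\cci$. The compact operators form an ideal, so the fact recalled in the introduction applies: if $0\leqslant A\leqslant B$ with $B$ in an ideal, then $A$ lies in the same ideal. Concretely, $\|A^{1/2}h\|^2\leqslant\|B^{1/2}h\|^2$ yields $A^{1/2}=CB^{1/2}$ for a contraction $C$, and $B^{1/2}$ is compact. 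Hence $\nu_\AAA(E)\in\cci$ for every $E$, and $\varphi_E:=\nu_\AAA(E)$ serves as the Pettis integral over $E$.

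The remaining details are routine. We also need weak measurability of $\AAA$ as a $\cci$-valued map, which is the same statement as measurability of $t\mapsto\tr(\AAt Y)$ and is already covered above. The only genuinely delicate point, which I expect to be the main obstacle, is extending the identity $\tr(\nu_\AAA(E)Y)=\int_E\tr(\AAt Y)\,d\mu$ from rank-one $Y$ to all of $\ccj$. That extension needs finiteness of $\|\AAA\|_{\GG}$, which is guaranteed by \cite[Proposition~1.8]{Matija}. Alternatively, one can cite \cite[Theorem 2.3]{MMS} directly, since it presumably reduces Pettis integrability to the condition $\nu_\AAA(E)\in\cci$ for all $E$, and the positivity argument verifies exactly that condition. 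No finiteness of $\mu$ is needed, because positivity replaces the $\mu$-continuity used in Theorem~\ref{TPetisA}.
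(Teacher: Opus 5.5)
Your proof is correct. The paper does not prove this statement itself: it cites \cite[Theorem~2.3]{MMS}. It later proves the generalization Theorem~\ref{Tpetispoz}, which for $\Phi=\ell^\infty$ (so $\ccphin=\cci$ and $\ccphiz=\ccj$) contains this statement.

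Your argument and that proof share the same skeleton:
\begin{itemize}
\item trace duality for the dual of the ideal;
\item the identity $\tr(\nu_\AAA(E)Y)=\int_E\tr(\AAt Y)\,d\mu$;
\item membership of $\nu_\AAA(E)$ in the ideal via $0\leqslant\nu_\AAA(E)\leqslant\nu_\AAA(\WWW)$.
\end{itemize}

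The difference is how the trace identity is reached. You get weak integrability and the identity for all trace-class $Y$ directly from finiteness of $\|\AAA\|_{\GG}$, with no positivity. The paper instead proves the bound $\int_\WWW|\tr(\AAt X)|\,d\mu\leqslant\|X\|_{\Phi^*}\bigl\|\int_\WWW\AAA\,d\mu\bigr\|_\Phi$ using Cauchy--Schwarz for the positive forms $\AAt$ and the abstract H\"older inequality. It then proves the identity first for positive $X$, by a monotone interchange, and extends it by writing $X$ as a combination of four positive operators.

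Your route is shorter, and it isolates where positivity matters in the compact case: only in showing $\nu_\AAA(E)\in\cci$. In fact it shows that any weakly$^*$ integrable $\cci$-valued $\AAA$ is $\cci$-Pettis integrable once all $\nu_\AAA(E)$ are compact. However, it relies on the dual of $\cci$ being the predual $\ccj$ of $\BH$, so it does not carry over to general $\ccphin$. For $\Phi\nsim\ell^\infty$ the dual is strictly larger than $\ccj$, and weak$^*$ integrability alone no longer makes $t\mapsto\tr(\AAt X)$ integrable. There the paper's positivity argument is essential, and it also yields the norm identity $\|\AAA\|_{\PP}=\bigl\|\int_\WWW\AAA\,d\mu\bigr\|_\Phi$.

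One small precision. In the Schmidt-expansion interchange, the dominating function for the partial sums is $\sum_n s_n(Y)\,|\scal{\AAt f_n}{e_n}|$, whose integral is at most $\|\AAA\|_{\GG}\|Y\|_1$ by Tonelli. The bound $\int_\WWW|\tr(\AAt Y)|\,d\mu\leqslant\|\AAA\|_{\GG}\|Y\|_1$ follows from this; it is not itself the domination.
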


\section{Preliminary results on  integrability of $\ccphin$-valued functions}

In this section, we provide some preliminary results concerning o.\,v. valued functions that take values in separable s.\,n. ideals.
As in the case of s.\,n. ideals, 
we denote by $\mathfrak{c}^{(\circ)}_{\Phi}$ the closure of the space of all complex sequences with finite non-zero elements in $\mathfrak{c}_\Phi$. It can be shown that $a\in\cphin$ if and only if $\lim_{n\to\infty}\Phi((a_k)_{k=n}^\iii)=0$, implying that $a\in\cphipn$ if and only if $(|a_n|^p)_{n=1}^\iii\in\cphin$. Furthermore, by \cite[Theorem~2.7, (b)]{S} we have $A\in\ccphin$ if and only if  $(s_n(A))_{n=1}^\iii\in\cphin$.  Similarly to \cite[Remark~1.2]{Matija}, we have the following result.

\begin{proposition}\label{KriterijumPhiNula}
    Let $\Phi$ be an s.\,n. function and $A\in\BH$. Then, $A\in\ccphin$ if and only if $(\scal{Ae_n}{e_n})_{n=1}^\iii\in\cphi^{(\circ)}$ for every o.\,n.\,s. $(e_n)_{n=1}^\iii$ in $\HH$.
\end{proposition}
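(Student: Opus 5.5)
The plan is to prove the two implications separately. For the forward one I use approximation by finite-rank operators together with Theorem~\ref{Simon}. For the reverse one I reduce to self-adjoint operators and use the characterization $A\in\ccphin \iff (s_n(A))_{n=1}^\iii\in\cphin$ from \cite[Theorem~2.7, (b)]{S}.

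\emph{($\Rightarrow$)} Let $A\in\ccphin$ and fix an o.\,n.\,s. $(e_n)_{n=1}^\iii$. Write $d(T)=(\scal{Te_n}{e_n})_{n=1}^\iii$ for $T\in\ccphi$; this map is linear in $T$.
\begin{enumerate}
\item By Theorem~\ref{Simon}, applied with $f=e$, we have $\Phi(d(T))\leqslant\|T\|_\Phi$.
\item Choose $F_k\in\KH$ with $\|A-F_k\|_\Phi\to0$. Then $\Phi(d(A)-d(F_k))\leqslant\|A-F_k\|_\Phi\to0$.
\item For a finite-rank $F=\sum_{j=1}^m x_j\otimes y_j^*$ we have $\scal{Fe_n}{e_n}=\sum_j\scal{e_n}{y_j}\scal{x_j}{e_n}$. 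By Cauchy--Schwarz and Bessel's inequality, $\sum_n|\scal{Fe_n}{e_n}|\leqslant\sum_j\|x_j\|\,\|y_j\|$, so $d(F)\in\ell^1$.
\item Since $\Phi\leqslant\ell^1$, tails of an $\ell^1$ sequence have $\Phi$-norm tending to zero. Hence $\ell^1\subseteq\cphin$ by the tail criterion.
\item So $d(A)$ is a $\Phi$-limit of elements of the closed space $\cphin$, and $d(A)\in\cphin$.
\end{enumerate}

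\emph{($\Leftarrow$)} Write $A=B+iC$ with $B=\operatorname{Re}A$ and $C=\operatorname{Im}A$ self-adjoint.
\begin{enumerate}
\item For any o.\,n.\,s. we have $\scal{Be_n}{e_n}=\operatorname{Re}\scal{Ae_n}{e_n}$, and similarly for $C$ with imaginary parts.
\item Since $|\operatorname{Re}a_n|\leqslant|a_n|$ and $\Phi$ is monotone in the moduli of the entries, $\cphin$ is closed under taking real and imaginary parts. So $B$ and $C$ inherit the hypothesis.
\item Because $\ccphin$ is a linear space, it suffices to treat a self-adjoint operator $B$ whose diagonal lies in $\cphin$, hence in $\mathfrak{c}_0$, for every o.\,n.\,s.
\end{enumerate}

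\emph{Compactness of $B$.} I expect this to be the main obstacle. Suppose $B$ is not compact. Then for some $\varepsilon>0$ one of the spectral projections $\chi_{[\varepsilon,\iii)}(B)$ or $\chi_{(-\iii,-\varepsilon]}(B)$ has infinite rank. Take an infinite o.\,n.\,s. $(e_n)$ in its range. Then $|\scal{Be_n}{e_n}|\geqslant\varepsilon$ for all $n$, which contradicts $\scal{Be_n}{e_n}\to0$. Hence $B$ is compact.

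\emph{Conclusion.} Let $(e_n)$ be an o.\,n.\,s. of eigenvectors of the compact self-adjoint $B$ for its nonzero eigenvalues $\lambda_n$. The hypothesis gives $(\lambda_n)\in\cphin$. The sequence $(s_n(B))$ is the decreasing rearrangement of $(|\lambda_n|)$.
\begin{enumerate}
\item Taking moduli keeps the sequence in $\cphin$, since $\Phi$ depends only on the moduli of the entries.
\item $\cphin$ is permutation invariant. Indeed, $\Phi$ is symmetric, permutations map finitely supported sequences to finitely supported ones, and they preserve the norm, so closures are preserved.
\item Therefore $(s_n(B))\in\cphin$, and \cite[Theorem~2.7, (b)]{S} gives $B\in\ccphin$.
\end{enumerate}
The same argument applies to $C$, and so $A=B+iC\in\ccphin$.
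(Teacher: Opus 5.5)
Your proof is correct, and its forward direction takes a different route from the paper's.

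\textbf{Converse direction.} This is essentially the paper's argument: split $A$ into real and imaginary parts, take an eigenvector system, and conclude with \cite[Theorem~2.7, (b)]{S}. There are two small differences:
\begin{itemize}
\item You replace the paper's citation of \cite[Theorem~1.8.7]{Rin} with a direct spectral-projection argument for the self-adjoint parts.
\item You make the passage from eigenvalues to singular values explicit.
\end{itemize}
Ordering the eigenvectors so that $|\lambda_n|$ decreases would spare you the permutation argument. The finite-rank case, where the eigenvector system is finite, is trivial.

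\textbf{Forward direction.} Here the routes genuinely differ. The paper compresses $A$ by $P_m=I-\sum_{k=1}^m e_k\otimes e_k^*$, bounds the $\Phi$-norm of the diagonal tail by $\|P_mAP_m\|_\Phi$, and invokes \cite[Theorem~III.6.3]{GK} to get $\|P_mAP_m\|_\Phi\to0$ from $P_m\to0$ strongly. You use only:
\begin{itemize}
\item the definition of $\ccphin$ as the closure of $\KH$;
\item the contraction $\Phi\bigl((\scal{Te_n}{e_n})_{n=1}^\iii\bigr)\leqslant\|T\|_\Phi$ from Theorem~\ref{Simon};
\item the $\ell^1$ bound on diagonals of finite-rank operators;
\item the closedness of $\cphin\supseteq\ell^1$.
\end{itemize}

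\textbf{What each buys.} Your argument is more elementary and works verbatim for every o.\,n.\,s. The paper's argument, as written, needs $(e_n)$ to be complete. Otherwise $P_m$ converges strongly to the projection onto $\{e_n\}^\perp$ rather than to $0$; the fix is to compress by $\sum_{k>m}e_k\otimes e_k^*$ instead. In exchange, the paper's route is a two-line argument once continuity of compressions in separable ideals is available. It also exhibits each tail directly as the diagonal of the single operator $P_mAP_m$.
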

\begin{proof}
    Let $A\in\ccphin$ and $e\in\BB$ be arbitrary. For all $m\in\N$ define orthonormal projection $P_m=I-\sum_{k=1}^me_k\otimes e_k^*$. Then
    $$\Phi((\scal{Ae_n}{e_n})_{n=m+1}^\iii)=\Phi((\scal{P_mAP_me_n}{e_n})_{n=1}^\iii)\leqslant\|P_mAP_m\|_\Phi,$$
    due to the equality \eqref{normaphi}. Moreover, $P_m\rightarrow0$ strongly, as $m\rightarrow\iii$. Thus, we have that $P_mAP_m\rightarrow0$ in the norm of separable ideal $\ccphin$, due to \cite[Theorem~\text{III}.\,6.\,3]{GK}, hence $(\scal{Ae_n}{e_n})_{n=1}^\iii\in\cphin$. 
    
Conversely, note that $(\scal{Ae_n}{e_n})_{n=1}^\iii\in\mathfrak{c}_0$ for every $e\in\BB$. The compactness of $A$ now follows from  \cite[Theorem~1.8.7]{Rin}. In the case where $\Phi\nsim\ell^\iii$,
as $$|\scal{A_{\mathfrak{R}}h}{h}|^2+|\scal{A_{\mathfrak{I}}h}{h}|^2=|\scal{Ah}{h}|^2,\qquad h\in\HH,$$for every $e\in\BB$ we get $(\scal{A_{\mathfrak{R}}e_n}{e_n})_{n=1}^\iii,(\scal{A_{\mathfrak{I}}e_n}{e_n})_{n=1}^\iii\in\cphi^{(\circ)}.$  By taking o.\,n.\,s. that consists of eigenvectors of $A_{\mathfrak{R}}$ and $A_{\mathfrak{I}}$ respectively, we get that the sequence of singular values of $A_{\mathfrak{R}}$ and $A_{\mathfrak{I}}$ are in $\cphin$, proving that $A_{\mathfrak{R}}$ and $A_{\mathfrak{I}}$ are in the ideal $\ccphin$. Thus, $A=A_{\mathfrak{R}}+iA_{\mathfrak{I}}\in\ccphin$, as proclaimed.
\end{proof}

Due to Theorem \ref{duali}, the dual space of ideal $\ccphin$ is described in a useful way, as the space of operators. Furthermore, as these ideals are separable, we are able to improve the measurability properties of weakly$^*$ $\mu$-measurable $\AAA\colon\WWW\to\ccphin$. More precisely, the following holds

\begin{lemma}\label{glavnaPomoc} Let $\Phi$ be an arbitrary s.\,n. function and $\AAA\colon\WWW\rightarrow\BH$ be weakly$^*$ $\mu$-measurable, such that $\AAA_t\in\ccphin$ for all $t\in\Omega$. Then $\AAA$ is $\ccphin$-strongly $\mu$-measurable.
\end{lemma}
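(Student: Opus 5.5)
The plan is to apply the Pettis Measurability Theorem (\cite[Theorem~1.1.20]{ABS}) in the separable Banach space $X=\ccphin$. Since $X$ is separable, $\AAA$ is automatically $\mu$-essentially separably valued. It therefore suffices to show that $\AAA$ is weakly $\mu$-measurable as a $\ccphin$-valued function. In fact, by the version of the theorem using a norming set of functionals, it is enough to check measurability of $t\mapsto\langle\AAA_t,x^*\rangle$ for $x^*$ in a weak$^*$-dense (or norming) subset of $X^*$.

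First I identify the dual via Theorem~\ref{duali}. If $\Phi\nsim\ell^1$, every $x^*\in(\ccphin)^*$ has the form $A\mapsto\tr(AX)$ with $X\in\ccphiz$. If $\Phi\sim\ell^1$, then $\ccphin=\ccj$ (up to equivalent norms), and the functionals are $A\mapsto\tr(AX)$ with $X\in\BH$. In both cases I must show that $t\mapsto\tr(\AAt X)$ is $\mu$-measurable.

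For finite rank $X$ this is immediate. Write $X=\sum_{k=1}^m f_k\otimes g_k^*$. Then $\tr(\AAt X)=\sum_k\scal{\AAt f_k}{g_k}$, and each term is measurable by polarization from the weak$^*$ measurability of $\AAA$.

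For general $X$ I use the Schmidt expansion $X=\sum_n s_n(X)e_n\otimes f_n^*$ and formula \eqref{trAX}:
\[
\tr(\AAt X)=\sum_{n=1}^\iii s_n(X)\scal{\AAt e_n}{f_n}.
\]
This series converges for every $t$. Indeed, $(\scal{\AAt e_n}{f_n})_n\in\cphi$ by Theorem~\ref{Simon}, and the abstract Hölder inequality \eqref{abstraktnihelder} with $(s_n(X))\in\mathfrak{c}_{\Phi^*}$ gives absolute convergence. So $\tr(\AAt X)$ is a pointwise limit of measurable partial sums and hence measurable. The same argument covers the case $\Phi\sim\ell^1$, $X\in\BH$: there $\AAt\in\ccj$, so I use the Schmidt expansion of $\AAt$ itself, or approximate $X$ weak$^*$ by finite rank operators $P_mXP_m$, where $\tr(\AAt P_mXP_m)\to\tr(\AAt X)$ for each fixed $t$ because $\AAt$ is trace class.

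The main obstacle is this pointwise convergence for non-compact or general $X$. In the separable-ideal setting it reduces to convergence of a numerical series, which is secured by Hölder, so the difficulty is mild. A cleaner alternative that avoids case distinctions is to note that the finite rank functionals form a norming subset of $(\ccphin)^*$, which follows from \eqref{normaphi}, and to invoke the norming-set version of the Pettis theorem directly. Either way, strong $\mu$-measurability follows.
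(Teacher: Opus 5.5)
Your proposal is correct and takes essentially the paper's route: separability of $\ccphin$ plus the Pettis Measurability Theorem, with weak measurability obtained by writing $\tr(\AAt X)$ as a pointwise convergent series of functions $t\mapsto\scal{\AAt u}{v}$, each $\mu$-measurable by polarization. The only difference is that the paper expands $\tr(\AAt X)=\sum_n\scal{\AAt Xe_n}{e_n}$ in a fixed orthonormal basis, which covers $\Phi\sim\ell^1$ (where $X\in\BH$ need not be compact) without a case split. In that case your $P_mXP_m$ approximation works, whereas the Schmidt expansion of $\AAt$ itself would not, because its vectors depend on $t$.
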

\begin{proof}
    Let $X\in(\ccphin)^*$ be an arbitrary operator and $(e_n)_{n=1}^\infty$ be an o.\,n.\,b. of $\HH$. First, weak$^*$ $\mu$-measurability of $\AAA$ implies that the functions $\scal{\AAt Xe_n}{e_n}$ are $\mu$-measurable, for all $n\in\N$. Furthermore, as $\AAt X\in\ccj$ for all $t\in\WWW$, we get $\tr(\AAt X)=\sumn\scal{\AAt Xe_n}{e_n}$, implying $\ccphin$-weak $\mu$-measurability of $\AAA$. Finally, since the ideal $\ccphin$ is separable, it follows by  Pettis Measurability Theorem that $\AAA$ is $\ccphin$-strongly $\mu$-measurable, as proclaimed.
\end{proof}

Now we start investigating  $\ccphin$-Pettis integrability. We will do so by proving that  weakly$^*$ integrable functions, with some additional assumptions, have their $\ccphin$-Pettis integral over all measurable subsets equal to the corresponding weak$^*$ integrals. First, we prove that the converse always holds.

\begin{lemma}\label{petisjegeljfand}
    Let $\AAA\in\PP(\WWW,\mu,\ccphin)$. Then, $\AAA$ is weakly$^*$ integrable and its Pettis integral over $E\in\M$ is equal to its weak$^*$ integral $\int_E\AAA\,d\mu$.
\end{lemma}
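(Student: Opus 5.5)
The idea is to test the Pettis integrability of $\AAA$ against rank-one functionals, which Theorem~\ref{duali} places inside the dual of $\ccphin$. Fix $f,g\in\HH$ and put $X=f\otimes g^*$. This is a finite-rank operator, so $X\in\ccphiz$. By Theorem~\ref{duali}, the map $A\mapsto\tr(AX)$ is a bounded linear functional on $\ccphi$, hence also on $\ccphin$. This holds whether or not $\Phi\sim\ell^1$, since rank-one operators lie in $\mathcalb{B}_{\Phi^*}(\HH)$ too.

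Next we compute the trace. For any $A\in\BH$ we have $AX=(Af)\otimes g^*$, so $\tr(AX)=\scal{Af}{g}$. Pettis integrability of $\AAA$ in $\ccphin$ means that each scalar function $t\mapsto\tr(\AAt X)$ is $\mu$-integrable. In particular $t\mapsto\scal{\AAt f}{g}$ is integrable for all $f,g\in\HH$. Taking $g=f$ shows that $\AAA$ is weakly$^*$ integrable, so the operator $\int_E\AAA\,d\mu\in\BH$ exists for every $E\in\M$.

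Now let $\AAA_E\in\ccphin$ denote the Pettis integral over $E$. By definition of the Pettis integral, applied to the functional induced by $X=f\otimes g^*$,
\[
\scal{\AAA_E f}{g}=\tr(\AAA_E X)=\int_E\tr(\AAt X)\,d\mu(t)=\int_E\scal{\AAt f}{g}\,d\mu(t)=\Bigl\langle\Bigl(\int_E\AAA\,d\mu\Bigr)f,g\Bigr\rangle .
\]
This holds for all $f,g\in\HH$, so $\AAA_E=\int_E\AAA\,d\mu$ as bounded operators.

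There is no real obstacle here. The only point needing care is that rank-one operators genuinely induce bounded functionals on $\ccphin$ in every case, including $\Phi\sim\ell^1$, and that the trace pairing reduces to $\scal{Af}{g}$. Both follow from Theorem~\ref{duali} and the identity $\tr(h\otimes g^*)=\scal{h}{g}$ recalled earlier.
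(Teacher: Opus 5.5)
Your proof is correct and follows essentially the same argument as the paper: test the Pettis integral against the trace functional induced by a rank-one operator, and read off both weak$^*$ integrability and the identification of the integrals. The only cosmetic difference is that you use $f\otimes g^*$ and recover the full sesquilinear form directly. The paper uses only $f\otimes f^*$ and implicitly relies on the fact that quadratic forms determine an operator on a complex Hilbert space (polarization).
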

\begin{proof}
Due to the formula \eqref{trAX}, for every $t\in\Omega$ and $f\in\HH$ holds  $\tr(\AAt(f\otimes f^*))=\scal{\AAt f}{f}$. Since for every $f\in\HH$ holds $f\otimes f^*\in\KH\subset\ccphiz$, the previous formula gives us weak$^*$ $\mu$-measurability of $\AAA$. Moreover, as
$\int_\WWW|\scal{\AAt f}{f}|\,d\mu(t)=\int_\WWW|\tr(\AAt(f\otimes f^*))|\,d\mu(t)<+\iii,$
we get weak$^*$ integrability of $\AAA$. Now, let $E\in\M$ be arbitrary and let $I_E\in\ccphin$ be the Pettis integral of $\AAA$ over $E\in\M$. Then, for every $f\in\HH$ holds
\begin{multline*}
        \scal{I_Ef}{f}=\textup{tr}(I_E(f\otimes f^*))=\int_E\textup{tr}(\AAA_t(f\otimes f^*))\,d\mu(t)\\
        =\int_E\scal{\AAA_tf}{f}d\mu(t)=\scal{\left(\int_E \AAA\,d\mu\right) f}{f}, 
\end{multline*} 
implying that $I_E=\int_E\AAA\,d\mu$, completing the proof.
\end{proof}
In the following theorem, we give some sufficient conditions 
for $\ccphin$-Pettis integrability.

\begin{theorem}\label{boljaVerzija}
Let $\AAA\colon\WWW\to\BH$ be weakly$^*$ integrable function and $\Phi$ be arbitrary s.\,n. function. Also, let $\AAt\in\ccc_{\Phi}^{(\circ)}(\HH)$ for all $t\in\WWW$ and $\int_E \AAA \,d\mu\in\ccphi$ for every $E\in\M$. Moreover, let the measure $\nu_\AAA\colon \M\to\ccphi$ be countably additive. Then $\AAA\in\PP(\WWW,\mu,\ccphin)$.
\end{theorem}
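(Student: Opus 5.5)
The plan is to split $\WWW$ into countably many pieces on which $\AAA$ is Bochner integrable in $\ccphin$, and to glue the pieces together using the countable additivity of $\nu_\AAA$. Bochner integrals avoid any approximation of functionals on $\ccphin$. They give membership in the separable ideal and the Pettis identity on each piece, and countable additivity upgrades both to arbitrary $E\in\M$.

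\emph{Step 1 (decomposition).} By Lemma~\ref{glavnaPomoc}, $\AAA$ is $\ccphin$-strongly $\mu$-measurable. Hence $t\mapsto\|\AAt\|_\Phi$ is $\mu$-measurable, and $\AAA$ vanishes outside a $\sigma$-finite set $G=\bigcup_k G_k$ with $\mu(G_k)<+\iii$, up to a $\mu$-null set. This holds because $\AAA$ is an a.e. limit of $\mu$-simple functions, whose supports have finite measure. Put
\[
F_{k}=G_k\cap\{t:\ \|\AAt\|_\Phi\leqslant k\},
\]
and disjointify these sets to obtain a partition $(D_j)_{j=1}^\iii$ of $G$ (modulo null sets). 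On each $D_j$ the function $\AAA$ is strongly measurable, bounded in $\|\cdot\|_\Phi$, and supported on a set of finite measure. Thus $\AAA\chi_{D_j}\in L^1(\WWW,\mu,\ccphin)$.

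\emph{Step 2 (pieces).} Fix $E\in\M$. The Bochner integral $B_j$ of $\AAA\chi_{E\cap D_j}$ lies in the closed space $\ccphin$. Since Bochner integrable functions are Pettis integrable, Lemma~\ref{petisjegeljfand} identifies $B_j$ with the weak$^*$ integral:
\[
B_j=\nu_\AAA(E\cap D_j).
\]
The complement $E\setminus G$ is $\mu$-null, where $\AAA=0$ a.e., so $\nu_\AAA(E\setminus G)=0$. Countable additivity of $\nu_\AAA$ in $\ccphi$ then gives
\[
\nu_\AAA(E)=\sum_j B_j
\]
with convergence in $\|\cdot\|_\Phi$. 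Because $\ccphin$ is closed in $\ccphi$, it follows that $\nu_\AAA(E)\in\ccphin$.

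\emph{Step 3 (Pettis identity).} Let $x^*$ be an arbitrary functional on $\ccphin$. By Theorem~\ref{duali} it has the form $A\mapsto\tr(AX)$, with $X\in\ccphiz$ if $\Phi\nsim\ell^1$ and $X\in\BH$ if $\Phi\sim\ell^1$. Put $g(t)=\tr(\AAt X)$, which is measurable by Lemma~\ref{glavnaPomoc}. On each piece, the Bochner integral commutes with $x^*$, so
\[
\int_{E\cap D_j}g\,d\mu=\tr(B_jX).
\]
Summing over $j$ and using continuity of $x^*$ gives
\[
\sum_j\int_{E\cap D_j}g\,d\mu=\tr(\nu_\AAA(E)X)\quad\text{for every } E\in\M.
\]
To obtain integrability, apply this with $E$ equal to each of the four sets $\{\pm\operatorname{Re}g\geqslant0\}$ and $\{\pm\operatorname{Im}g\geqslant0\}$. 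On each such set the corresponding real or imaginary part has constant sign, so monotone convergence gives $\int|\operatorname{Re}g|\,d\mu<+\iii$ and $\int|\operatorname{Im}g|\,d\mu<+\iii$. Hence $g\in L^1(\mu)$. Dominated convergence then turns the series into $\int_E g\,d\mu=\tr(\nu_\AAA(E)X)$. This is exactly Pettis integrability, with $\varphi_E=\nu_\AAA(E)$.

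\emph{Main obstacle.} The delicate point is that $\nu_\AAA(E)$ must lie in the \emph{separable} ideal $\ccphin$ rather than merely in $\ccphi$. When $\ccphin\neq\ccphi$, the Pettis identity is only tested against $\ccphiz$, not against the full dual of $\ccphi$. The Bochner-piece decomposition combined with norm countable additivity resolves this. The only care needed is the $\sigma$-finite support argument in Step 1, together with checking that the null set $\WWW\setminus G$ contributes nothing to $\nu_\AAA$.
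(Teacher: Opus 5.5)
Your proposal is correct and is essentially the paper's proof. Both reduce to a $\sigma$-finite support and cut it into pieces on which $\AAA$ is $\ccphin$-Bochner integrable. Both then use norm countable additivity of $\nu_\AAA$ to get $\nu_\AAA(E)\in\ccphin$ and the summed Pettis identity, and finish with weak integrability plus dominated convergence. Your four sign-sets argument is just an explicit form of the paper's bound by the total variation of $E\mapsto\tr(\nu_\AAA(E)X)$.

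Two cosmetic fixes are needed:
\begin{itemize}
\item The sets $F_k=G_k\cap\{\|\AAt\|_\Phi\leqslant k\}$ exhaust $G$ only if the $G_k$ are taken increasing. Alternatively, use the doubly indexed sets $G_l\cap\{k-1\leqslant\|\AAt\|_\Phi<k\}$, as the paper does.
\item $E\setminus G$ need not be $\mu$-null; what holds is that $\AAA=0$ a.e.\ there, and that is all you actually use.
\end{itemize}
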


\begin{proof}
First, we note that $\AAA$ is strongly $\mu$-measurable $\ccphin$-valued function, due to Lemma \ref{glavnaPomoc}. Thus, by \cite[Proposition~1.1.15]{ABS} we can assume that $\mu$ is $\sigma$-finite on $\WWW$. Hence, there exists the sequence $(\WWW_n)_{n=1}^\infty$ of mutually disjoint sets in $\M$, such that $\WWW = \bigsqcup_{n=1}^\infty \WWW_n$ and $\mu(\WWW_n)<+\iii$ for all $n\in\N$. Also, let us denote $F_k = \{t\in\WWW: k-1\leqslant\|\AAt\|_\Phi < k\}$ for $k\in\mathbb{N}$. We note that each $F_k\in\M$ because $\AAA$ is strongly $\mu$-measurable and $\WWW = \bigsqcup_{k=1}^\infty F_k$. From the construction of the previous sets we get
$$\int_{\WWW_n\cap F_k} \|\AAt\|_\Phi\, d\mu(t) \leqslant k\cdot\mu(\WWW_n)<+\iii,\quad n,k\in\mathbb{N},$$
implying the $\ccphin$-Bochner integrability of $\AAA$ on $\WWW_n\cap F_k$ for every $n,k\in\mathbb{N}$. Rest of the proof is similar to \cite[Corollary~5, Page 70]{DU}, but without the assumption of finiteness of measure $\mu$. For ease of notation, we denote $S_n=\bigsqcup_{k,l=1}^n \WWW_l\cap F_k$ for $n\in\N$. Then, $S_n\uparrow\WWW$ as $n\to\iii$ and
$$\nu_\AAA (E\cap S_n) = \int_E \AAA \chi_{S_n} d\mu\in\ccphin,\qquad n\in\N,\,\,E\in\M.$$
Due to assumed countable additivity of $\nu_\AAA$ we get
$$\nu_\AAA(E) =\lim_{n\to\iii} \nu_\AAA (E\cap S_n)=\lim_{n\to\iii}\int_E \AAA \chi_{S_n} \,d\mu\in\ccphin,\qquad E\in\M.$$
Let $E\in\M$ and $y^*\in(\ccphin)^*$ be arbitrary. Due to $\ccphin$-Bochner integrability of $\AAA$ on each set $E\cap S_n$, we have
$$\scal{\nu_\AAA(E)}{y^*} =\lim_{n\to\iii}\scal{\int_{E\cap S_n}\!\!\!\AAA\,d\mu}{y^*}=\lim_{n\to\iii} \int_{E\cap S_n} \scal{\AAA_t}{y^*}d\mu(t).$$
Define the function $\Lambda_{\AAA,y^*}\colon\M\rightarrow\C$ by $\Lambda_{\AAA,y^*}(E)=\scal{\nu_\AAA(E)}{y^*}$ for $E\in\M$.
Then, $\Lambda_{\AAA,y^*}$ is a countably additive complex measure on the $\sigma$-algebra $\M$, thus having finite total variation $|\Lambda_{\AAA,y^*}|(\Omega)$. Since Bochner integral and functional commutes we have the equality $\Lambda_{\AAA,y^*}(E)=\int_E\scal{\AAt}{y^*}d\mu(t)$  and $\int_{E}|\scal{\AAA_t}{y^*}|\,d\mu(t)\leqslant|\Lambda_{\AAA,y^*}|(\Omega)<+\infty$ for every $E\in\M$ such that $E\subset S_n$. Specially, we have $$\int_{\Omega}|\scal{\AAA_t}{y^*}|\cdot \chi_{S_n}(t)\,d\mu(t)\leqslant|\Lambda_{\AAA,y^*}|(\Omega)<+\infty,\qquad n\in\N,$$
and since the sequence $(S_n)_{n=1}^\infty$ increases to $\Omega$, by applying Monotone Convergence Theorem we obtain weak integrability of the function $\AAA$. Finally, due to Dominated Convergence Theorem, for every $E\in\M$ holds
\begin{equation*}
    \scal{\nu_\AAA(E)}{y^*}=\lim_{n\rightarrow\iii}\int_E\scal{\AAt}{y^*}\chi_{S_n}(t)\,d\mu(t)=\int_E\scal{\AAt}{y^*}d\mu(t),
\end{equation*}
showing that $\nu_\AAA(E)$ is the $\ccphin$-Pettis integral of $\AAA$, thus completing the proof.
\end{proof}
Note that even though we assumed that the weak$^*$ integrals $\int_E\AAA\,d\mu$ are in $\ccphi$ for all $E\in\M$, it turned out that they are actually in $\ccphin$.

Before moving on, we note that the previous theorem can be used to deduce $\ccphin$-Pettis integrability of functions of the form $\AAA X\BBB$ appearing in \cite[Theorem~3.1]{JKL20}. Indeed, countable additivity is already proven in the mentioned theorem, in the equivalent form $\lim_{n\to\infty}\|\int_{\delta_n} \AAA X\BBB\,d\mu - \int_\delta \AAA X\BBB \,d\mu\|_\Phi=0$ for $\delta_n\uparrow\delta$. All other assumptions are readily verified.

\section{$L^p_G(\WWW,\mu,\BH)$ spaces}

 For $\AAA\colon\WWW\to\BH$  we use the notations $\AAA^*$ and $|\AAA|$ for the mappings $t\mapsto\AAt^*$ and $t\mapsto|\AAt|$, respectively.  Also, for o.\,v. functions $\AAA$ and $\BBB$, we denote their pointwise product by $\AAA\BBB$. Furthermore, for $A\in\BH$, by $\sigma(A)$ we denote its spectrum. Also, for a normal operator $A$ and a function $\varphi$ that is continuous on $\sigma(A)$, we have defined $\varphi(A)\in\BH$ due to functional calculus.
Thus, whenever meaningful, we will use the notation $\varphi(\AAA)$ for the mapping $t\mapsto\varphi(\AAt)$. 

In \cite{J05}, the author introduced the space $L^2_G(\WWW,\mu,\BH)$. It is a space of all weakly$^*$ $\mu$-measurable $\BH$-valued functions, such that $|\AAA|^2$ is weakly$^*$ integrable. As $\scal{|\AAt|^2f}{f}=\|\AAt f\|^2$ for all $f\in\HH$ and $t\in\Omega$, this space coincides with the space $L^2_s(\WWW,\mu,\BH)$. We note that due to the previous equality, for every weakly$^*$ $\mu$-measurable function $\AAA$, the mapping $|\AAA|^2$ is also weakly$^*$ $\mu$-measurable. The same holds for $\AAA^*$ as $\scal{\AAt^*f}{f}=\overline{\scal{\AAt f}{f}}$ for all $f\in\HH$ and $t\in\Omega$. Also, due to Parseval's equality (see \cite[Section~3]{J05}), the o.\,v. function $\AAA\BBB$ is weakly$^*$ $\mu$-measurable, whenever $\AAA$
 and $\BBB$ are such. In this section, we introduce the generalization of this space for all $1\leqslant p<+\iii$. To do that, we first need to prove that for every weakly$^*$ $\mu$-measurable $\BH$-valued function $\AAA$, the o.\,v. function $|\AAA|^p$ is also weakly$^*$ $\mu$-measurable, for every $p>0$. 
 
 First, we prove a technical lemma considering weak$^*$ $\mu$-measurability of o.\,v. functions obtained due to the functional calculus. With $C(K)$ we will denote the set of all continuous functions $\varphi:K\to\C$ on the given compact set $K\subset\C$.

\begin{lemma}\label{tv1}
    Let $\AAA\colon\WWW\rightarrow\BH$ be a weakly$^*$ $\mu$-measurable o.\,v. function such that all  $\AAA_t$ are normal operators. 
    Furthermore, assume that there is a compact set $K\subset\C$ such that $\sigma(\AAA_t)\subseteq K$ for all $t\in\Omega$. Then, for every $\varphi\in C(K)$ the operator valued function $\varphi(\AAA)\colon\WWW\to\BH$ is weakly$^*$  $\mu$-measurable.
\end{lemma}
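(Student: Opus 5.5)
We will use Stone--Weierstrass approximation by polynomials in $z$ and $\bar z$, followed by a limiting argument.

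\textbf{Step 1: a multiplicative class.} We first show that weakly$^*$ $\mu$-measurable functions with values in $\BH$ are closed under products, adjoints and linear combinations. Adjoints and linear combinations are immediate: $\scal{\AAt^*f}{f}=\overline{\scal{\AAt f}{f}}$, and the defining property is linear.

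For products, we use polarization. Weak$^*$ measurability, i.e.\ measurability of $t\mapsto\scal{\AAt f}{f}$ for every $f$, gives measurability of $t\mapsto\scal{\AAt f}{g}$ for all $f,g\in\HH$. Fix an o.\,n.\,b. $(e_n)_{n=1}^\iii$ of $\HH$. By Parseval's equality,
$$\scal{\AAt\BBB_t f}{g}=\sumn \scal{\BBB_t f}{e_n}\scal{\AAt e_n}{g},$$
which is a pointwise limit of measurable functions (this is the remark made above, referring to \cite[Section~3]{J05}).

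Consequently, for every polynomial $q(z,\bar z)=\sum c_{jk}z^j\bar z^k$, the function $t\mapsto q(\AAt,\AAt^*)=\sum c_{jk}\AAt^j(\AAt^*)^k$ is weakly$^*$ $\mu$-measurable. Since $\AAt$ is normal, $q(\AAt,\AAt^*)$ coincides with $q(\AAt)$ defined by the continuous functional calculus, with $q$ viewed as the function $z\mapsto q(z,\bar z)$ on $\sigma(\AAt)$.

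\textbf{Step 2: approximation.} Let $\varphi\in C(K)$. By the Stone--Weierstrass theorem on the compact set $K$ (such polynomials form a self-adjoint subalgebra containing constants and separating points), there are polynomials $q_m$ in $z,\bar z$ with $\sup_{z\in K}|q_m(z)-\varphi(z)|\to 0$.

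For each $t$ we have $\sigma(\AAt)\subseteq K$. The functional calculus for normal operators is isometric, $\|\psi(A)\|=\sup_{\sigma(A)}|\psi|$, so
$$\|q_m(\AAt)-\varphi(\AAt)\|\leqslant \sup_K|q_m-\varphi|\to0$$
uniformly in $t$. Hence $\scal{q_m(\AAt)f}{f}\to\scal{\varphi(\AAt)f}{f}$ for every $t$ and every $f$. As a pointwise limit of $\mu$-measurable scalar functions, $t\mapsto \scal{\varphi(\AAt)f}{f}$ is $\mu$-measurable, which proves the claim.

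\textbf{Main obstacle.} The delicate step is the product closure in Step 1. There one must check that the Parseval series converges for every $t$, which holds because it is an absolutely convergent inner-product expansion, and that measurability passes from the quadratic forms to the sesquilinear ones via polarization. Both are routine. The uniform spectral bound $\sigma(\AAt)\subseteq K$ is what makes a single approximating sequence work for all $t$ simultaneously.
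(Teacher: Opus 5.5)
Your proof is correct and takes essentially the same route as the paper. Closure under adjoints and products (via Parseval) handles the terms $\AAA^j(\AAA^*)^k$; Stone--Weierstrass on $K$ gives polynomials in $z,\bar z$ approximating $\varphi$; and the norm bound from the continuous functional calculus shows that $t\mapsto\scal{\varphi(\AAt)f}{f}$ is a pointwise limit of measurable functions.
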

\begin{proof}
Since $\AAA^*$ is weakly$^*$ $\mu$-measurable,
by induction we get that all  
o.\,v. functions $\AAA^n \AAA^{*m}$ are also weakly$^*$ $\mu$-measurable for all integers $m, n \geqslant 0$.   
Stone-Weierstrass theorem gives us a sequence $(p_n)_{n=1}^\iii$ of polynomials such that the functions $z\mapsto p_n(z, \overline z)$ converge uniformly on the compact $K$ to function $\varphi$. 
Due to \cite[12.24]{RU} we have  $\lim_{n\to\infty}\left\|p_n(\AAA_t, \AAA_t^*) - \varphi(\AAA_t)\right\|=0$ for every $t\in\WWW$, implying the equality $$\scal{\varphi(\AAt)f}{f}=\lim_{n\to\iii}\scal{p_n(\AAA_t, \AAA_t^*)f}{f},\qquad t\in\Omega,\,\, f\in\HH,$$ 
 proving proclaimed  weak$^*$ $\mu$-measurability of the o.\,v. function $\varphi(\AAA)$.
\end{proof}

The following two results resolve the issue of the weak$^*$ $\mu$-measurability of $\AAA^p$, for all positive exponents.

\begin{lemma}\label{tv2}
   Let $p>0$ and let $\AAA\colon\WWW\to\BH$ be  a weakly$^*$ $\mu$-measurable function, such that $\AAt\geqslant0$ for all $t\in\WWW$. Then, the o.\,v. function $\AAA^p$ is weakly$^*$ $\mu$-measurable. 
\end{lemma}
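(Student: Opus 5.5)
The plan is to reduce to Lemma \ref{tv1} by truncating the spectrum, since the operators $\AAt$ need not be uniformly bounded in $t$. For $N\in\N$ set $\Omega_N=\{t\in\WWW:\|\AAt\|\leqslant N\}$. The first step is to check that $\Omega_N\in\M$. Because $\AAt\geqslant0$, we have $\|\AAt\|=\sup_{f}\scal{\AAt f}{f}$, where the supremum runs over a countable dense subset $D$ of the unit sphere of $\HH$; this uses separability of $\HH$. Consequently $t\mapsto\|\AAt\|$ is $\mu$-measurable, being a countable supremum of the measurable functions $t\mapsto\scal{\AAt f}{f}$. Hence $\Omega_N\in\M$ and $\Omega_N\uparrow\WWW$.

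Next I would introduce the truncated functions $\AAA^{(N)}_t=\chi_{\Omega_N}(t)\AAt$. Each is weakly$^*$ $\mu$-measurable, since $\scal{\AAA^{(N)}_t f}{f}=\chi_{\Omega_N}(t)\scal{\AAt f}{f}$. Moreover $\AAA^{(N)}_t\geqslant 0$ and $\sigma(\AAA^{(N)}_t)\subseteq K_N:=[0,N]$ for every $t$. Let $\varphi(z)=|z|^p$, which is continuous on $K_N$ and agrees with $z^p$ on $[0,N]$. Lemma \ref{tv1} then shows that $t\mapsto\varphi(\AAA^{(N)}_t)=(\AAA^{(N)}_t)^p$ is weakly$^*$ $\mu$-measurable. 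By functional calculus, $(\AAA^{(N)}_t)^p=\chi_{\Omega_N}(t)\AAt^p$; the value at $t\notin\Omega_N$ is $0^p=0$ because $p>0$.

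Finally, for each $f\in\HH$ and $t\in\WWW$ we have $\scal{\AAt^pf}{f}=\lim_{N\to\iii}\chi_{\Omega_N}(t)\scal{\AAt^pf}{f}$, since $t\in\Omega_N$ for all $N$ large enough. The pointwise limit of $\mu$-measurable functions is $\mu$-measurable, so $t\mapsto\scal{\AAt^pf}{f}$ is $\mu$-measurable, which proves the lemma. The only delicate point is the measurability of $\Omega_N$, that is, of $t\mapsto\|\AAt\|$. This is settled by the countable-supremum argument, which relies on positivity of $\AAt$ (so that the norm equals the numerical radius) together with separability of $\HH$.
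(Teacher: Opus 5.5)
Your proof is correct. It differs from the paper's only in how it handles the fact that the spectra $\sigma(\AAt)$ are not uniformly bounded.

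The paper normalizes instead of truncating. It sets $\BBB_t=\AAt/\|\AAt\|$ (and $\BBB_t=0$ when $\AAt=0$), so that $0\leqslant\BBB_t\leqslant I$. It then applies Lemma \ref{tv1} once, with $K=[0,1]$ and $\varphi(x)=x^p$, and recovers $\AAt^p=\|\AAt\|^p\BBB_t^p$ from the $p$-homogeneity of the power function. You instead apply Lemma \ref{tv1} countably many times, on $[0,N]$, to the cut-offs $\chi_{\Omega_N}\AAA$, and then pass to a pointwise limit.

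Both routes rest on the same fact, the $\mu$-measurability of $t\mapsto\|\AAt\|$. You get it from the diagonal coefficients $\scal{\AAt f}{f}$ using positivity. The paper takes the supremum of $|\scal{\AAt f}{g}|$ over a countable dense set, which (after polarization) works for any weakly$^*$ measurable $\AAA$.

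The paper's reduction is a single step, but it uses the scaling law $(cA)^p=c^pA^p$. Yours uses nothing about the function beyond continuity. The same argument therefore shows that $\varphi(\AAA)$ is weakly$^*$ $\mu$-measurable for every $\varphi\in C([0,+\iii))$. Moreover, since the norm of a normal operator equals its numerical radius, your truncation also removes the uniform spectral bound from Lemma \ref{tv1} when $\varphi\in C(\C)$.
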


\begin{proof}
    Define o.\,v. function $\BBB:\Omega\to\BH$ by the following formula
    $$\BBB_t=\begin{cases}
      \frac{1}{\|\AAA_t\|}\cdot \AAA_t, & \text{if}\ \|\AAA_t\| \neq 0, \\
      0, & \text{if}\,\,\|\AAA_t\|=0.
    \end{cases}$$ 
First, the mapping $\Omega\ni t\mapsto\|\AAA_t\|$ is $\mu$-measurable since for every $t\in\WWW$ holds
$\|\AAA_t\|=\sup\{|\langle\AAA_tf,g\rangle|:f,g\in S\},$
where $S$ denotes a countable dense subset of the unit sphere of $\HH$. Hence,
 $\BBB$ is weakly$^*$ $\mu$-measurable and $\|\BBB_t\|\in\{0,1\}$ for all $t\in\Omega$. Thus, $\sigma(\BBB_t)\subset[0,1]$, as $0\leqslant\BBB_t\leqslant I$ for all $t\in\WWW$. By applying Lemma \ref{tv1} to the function $\varphi(x)=x^p$ on the compact set $K=[0,1]$ we get the weak$^*$ $\mu$-measurability of $\BBB^p$. Finally, as $\AAA_t^p=\|\AAA_t\|^p\cdot \BBB_t^p$ 
for all $t\in\Omega$, we obtain the proclaimed weak$^*$ $\mu$-measurability of $\AAA^p$.
\end{proof}

\begin{theorem}\label{tv3}
Let $p>0$ and $\AAA\colon\WWW\to\BH$ be weakly$^*$ $\mu$-measurable. Then, the o.\,v. function $|\AAA|^p$ is weakly$^*$ $\mu$-measurable. 
\end{theorem}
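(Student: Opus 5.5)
The plan is to reduce the statement to Lemma \ref{tv2}, applied to the positive operator valued function $|\AAA|^2=\AAA^*\AAA$. Then $|\AAA|^p=(|\AAA|^2)^{p/2}$ pointwise, by uniqueness of positive roots and the functional calculus for the positive operator $\AAt^*\AAt$.

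The first step is weak$^*$ $\mu$-measurability of $|\AAA|^2$, which was noted earlier in this section. Indeed,
\[
\scal{|\AAt|^2f}{f}=\|\AAt f\|^2 .
\]
Fix an o.\,n.\,b. $(e_n)_{n=1}^\iii$ of $\HH$. Parseval's equality gives
\[
\|\AAt f\|^2=\sumn|\scal{\AAt f}{e_n}|^2 .
\]
Each function $t\mapsto\scal{\AAt f}{e_n}$ is $\mu$-measurable. This follows from the measurability of the diagonal forms $t\mapsto\scal{\AAt h}{h}$ via the polarization identity
\[
\scal{\AAt f}{g}=\frac14\sum_{k=0}^{3}i^k\scal{\AAt(f+i^kg)}{f+i^kg}.
\]
A countable sum of nonnegative measurable functions is measurable, so $t\mapsto\scal{|\AAt|^2f}{f}$ is $\mu$-measurable for every $f\in\HH$.

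Secondly, $|\AAt|^2\geqslant0$ for all $t\in\WWW$. Lemma \ref{tv2} with exponent $p/2>0$ then yields weak$^*$ $\mu$-measurability of $(|\AAA|^2)^{p/2}$. Finally, $(|\AAt|^2)^{p/2}=|\AAt|^p$ for every $t$. Both sides are obtained from the positive operator $\AAt^*\AAt$ through the continuous functions $x\mapsto x^{p/2}$ and $x\mapsto (x^{1/2})^p$ on $[0,+\iii)$, which coincide, and the composition rule of continuous functional calculus gives equality.

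No step is a serious obstacle. The only point needing care is the measurability of the off-diagonal matrix coefficients used in Parseval's sum, and polarization handles it.
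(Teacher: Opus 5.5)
Your proposal is correct and is essentially the paper's proof. The paper also applies Lemma~\ref{tv2} to $|\AAA|^2=\AAA^*\AAA$ with exponent $\frac{p}{2}$, relying on the earlier remark that $|\AAA|^2$ is weakly$^*$ $\mu$-measurable because $\scal{|\AAt|^2f}{f}=\|\AAt f\|^2$. Your Parseval--polarization argument makes that remark explicit, whereas the paper obtains measurability of $t\mapsto\|\AAt f\|$ from the Pettis Measurability Theorem.
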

\begin{proof}
As noted before, the o.\,v. function $\AAA^*\AAA=|\AAA|^2$ is weakly$^*$ $\mu$-measurable and $|\AAA_t|^2\geqslant0$ for all $t\in\WWW$. We get the final conclusion by applying Lemma \ref{tv2} to the o.\,v. function $|\AAA|^2$ and exponent $\frac{p}{2}>0$.
\end{proof}

We also need some well-known inequalities for positive operators. 
Let $0\leqslant A \in \mathcalb{B}(\mathcal{H})$  and let $f$ be a unit vector in $\mathcal{H}$. Due to \cite[Lemma 2.1]{McCarthy}, we have $\scal{Af}{f}^p\leqslant\scal{A^pf}{f}$ for $p\leqslant1$, and $\scal{A^pf}{f}\leqslant\scal{Af}{f}^p$ for $0<p\leqslant1$.
As a direct consequence, for every positive $A\in\BH$ and unit vector $f\in\HH$ holds
\begin{equation}\label{nejednakostMece od2}
\scal{A^pf}{f}=\scal{(A^2)^\frac{p}{2}f}{f}\leqslant\scal{A^2f}{f}^\frac{p}{2}=(\|Af\|^2)^{\frac{p}{2}}=\|Af\|^p,\,\,\,0<p\leqslant2.
\end{equation}
\begin{equation}\label{nejednakostVece od2}
\|Af\|^p=(\|Af\|^2)^{\frac{p}{2}}=\scal{A^2f}{f}^\frac{p}{2}\leqslant\scal{(A^2)^\frac{p}{2}f}{f}\leqslant\langle A^pf,f\rangle,\,\,\,p\geqslant2.
\end{equation}
Previous inequalities, along with the Theorem \ref{tv3}, allow us to investigate o.\,v. functions of the form $\AAA^\alpha$. More precisely, we have the following result. 
\begin{theorem}\label{Gelpow}
Let $\AAA\colon\WWW\rightarrow\BH$ be weakly$^*$ integrable, such that $\AAA_t\geqslant0$ for every $t\in\WWW$. Then, for every $\alpha\in\left(0,\frac{1}{2}\right]$ we have $\AAA^\alpha\in L_s^{1/ \alpha}(\Omega,\mu,\BH)$ with a norm estimate $\|\AAA^\alpha\|_{1/\alpha}\leqslant\|\AAA\|_\GG^\alpha$.
\end{theorem}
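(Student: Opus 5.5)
Set $p=1/\alpha\geqslant 2$. The plan is to compute the $L^p_s$-norm of $\AAA^\alpha$ directly, one unit vector at a time, by means of inequality \eqref{nejednakostMece od2}.

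First, measurability. By Lemma \ref{tv2}, applied with exponent $\alpha>0$, the function $\AAA^\alpha$ is weakly$^*$ $\mu$-measurable. Hence, as noted after the definition of $L^p_s$ (Pettis Measurability Theorem), the map $t\mapsto\|\AAA_t^\alpha f\|$ is $\mu$-measurable for each $f\in\HH$.

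Next, the pointwise estimate. Fix a unit vector $f$ and $t\in\WWW$. Apply \eqref{nejednakostVece od2} to the positive operator $B=\AAA_t^\alpha$ with exponent $p\geqslant2$:
$$\|\AAA_t^\alpha f\|^p=\|Bf\|^p\leqslant\scal{B^pf}{f}=\scal{\AAA_tf}{f},$$
since $B^p=(\AAA_t^{\alpha})^{1/\alpha}=\AAA_t$ by the functional calculus. Equivalently, one can argue via McCarthy's inequality $\scal{A^{2\alpha}f}{f}\leqslant\scal{Af}{f}^{2\alpha}$ for $2\alpha\leqslant1$, which gives $\|\AAA_t^\alpha f\|^2\leqslant\scal{\AAA_tf}{f}^{2\alpha}$ and then raise to the power $p/2$. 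Either route yields the same bound.

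Integrating over $\WWW$ gives
$$\int_\WWW\|\AAA_t^\alpha f\|^p\,d\mu(t)\leqslant\int_\WWW\scal{\AAA_tf}{f}\,d\mu(t)=\int_\WWW|\scal{\AAA_tf}{f}|\,d\mu(t)\leqslant\|\AAA\|_\GG.$$
The right-hand side is finite by weak$^*$ integrability. The last step uses the second expression for $\|\cdot\|_\GG$ with $g=f$. For a general $f\neq0$, homogeneity gives finiteness, so $\AAA^\alpha\in L^{1/\alpha}_s(\WWW,\mu,\BH)$. Taking the supremum over unit $f$ and then the $p$-th root yields
$$\|\AAA^\alpha\|_{L^{1/\alpha}_s}\leqslant\|\AAA\|_\GG^{1/p}=\|\AAA\|_\GG^\alpha.$$

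The main obstacle is the pointwise inequality. One has to check that the direction of \eqref{nejednakostVece od2} is the correct one for $p\geqslant2$, and that $(\AAA_t^\alpha)^{1/\alpha}=\AAA_t$ holds. The latter is immediate from the continuous functional calculus on $[0,\|\AAA_t\|]$. Everything else is routine.
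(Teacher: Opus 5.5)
Your proof is correct and follows the paper's argument essentially step for step. You establish weak$^*$ measurability of $\AAA^\alpha$ (the paper cites Theorem \ref{tv3}; invoking Lemma \ref{tv2} directly is equally valid since $\AAA_t\geqslant0$), derive the pointwise bound $\|\AAA_t^\alpha f\|^{1/\alpha}\leqslant\scal{\AAA_tf}{f}$ from \eqref{nejednakostVece od2}, integrate, and bound by $\|\AAA\|_\GG$. The only blemish is that your opening sentence announces \eqref{nejednakostMece od2}, while the argument actually uses \eqref{nejednakostVece od2} (or your equivalent McCarthy route).
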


\begin{proof}
    Weak$^*$ $\mu$-measurability of the o.\,v. function $\AAA^\alpha$ is provided by Theorem \ref{tv3}. As $\frac{1}{\alpha}\in[2,+\infty),$ due to  \eqref{nejednakostVece od2} we have
\begin{equation}\label{nejalfa}
\int_\WWW\|\AAA_t^\alpha f\|^\frac{1}{\alpha}\,d\mu(t)\leqslant\int_\WWW\scal{(\AAA_t^\alpha)^\frac{1}{\alpha}f}{f}d\mu(t)=\int_\Omega\scal{\AAA_tf}{f}d\mu(t)\leqslant\|\AAA\|_\GG,
\end{equation}
for every unit vector $f\in\HH$, implying that $\AAA^\alpha\in L_s^{1/ \alpha}(\Omega,\mu,\BH)$. By raising \eqref{nejalfa} to the power $\alpha$ and taking the supremum over the unit vectors in $\HH$ we get the proclaimed norm estimate.
    \end{proof}
Now, we are ready to define a new class of o.\,v. functions mentioned at the beginning of this section, that will play a central role in our main results.

\begin{definition}
    For $p>0$, we will denote by $L^p_\GG(\WWW,\mu,\BH)$ the class of all weakly$^*$ $\mu$-measurable   functions $\AAA:\Omega\to\BH$, such that $|\AAA|^p\in\GG(\WWW,\mu,\BH)$.
\end{definition}

In the case where $\Omega=\N$ with the counting measure $\mu$ we will use the simplified notation $\ell_\GG^p(\BH)$.

If $p=2$, the spaces $L^2_s(\WWW,\mu,\BH)$ and $L^2_G(\WWW,\mu,\BH)$ coincide. Integration of inequalities \eqref{nejednakostMece od2} and \eqref{nejednakostVece od2}
gives us the relations between these spaces for other exponents.
\begin{proposition}\label{Implikacije} For $p\in[1,2]$, we have  $L^p_s(\WWW,\mu,\BH)\subset L^p_G(\WWW,\mu,\BH)$, and for $p\geqslant2$ we have $L^p_G(\WWW,\mu,\BH)\subset L^p_s(\WWW,\mu,\BH)$.
\end{proposition}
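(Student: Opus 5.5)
The plan is to apply inequalities \eqref{nejednakostMece od2} and \eqref{nejednakostVece od2} pointwise to the positive operator $|\AAA_t|$ and then integrate. Two elementary facts connect the two spaces:
\begin{itemize}
\item $\||\AAt| f\|=\|\AAt f\|$ for every $f\in\HH$, because $\||\AAt|f\|^2=\scal{|\AAt|^2f}{f}=\|\AAt f\|^2$.
\item $|\AAA|^p$ is weakly$^*$ $\mu$-measurable for every weakly$^*$ $\mu$-measurable $\AAA$, by Theorem \ref{tv3}.
\end{itemize}
By the second fact, measurability is never an issue in either inclusion; only the integrability conditions need to be compared.

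For $p\in[1,2]$, take $\AAA\in L^p_s(\WWW,\mu,\BH)$. Applying \eqref{nejednakostMece od2} to $A=|\AAt|$ and a unit vector $f$ gives
$$0\leqslant\scal{|\AAt|^pf}{f}\leqslant\||\AAt|f\|^p=\|\AAt f\|^p.$$
By homogeneity this becomes $\scal{|\AAt|^pf}{f}\leqslant\|f\|^{2-p}\|\AAt f\|^p$ for arbitrary $f\in\HH$. Integrating over $\WWW$ shows that $t\mapsto\scal{|\AAt|^pf}{f}$ is integrable for every $f$, so $|\AAA|^p$ is weakly$^*$ integrable, i.e. $|\AAA|^p\in\GG(\WWW,\mu,\BH)$. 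Hence $\AAA\in L^p_\GG(\WWW,\mu,\BH)$. Taking suprema over unit vectors also gives $\||\AAA|^p\|_\GG\leqslant\|\AAA\|_{L^p_s}^p$. For a positive o.v. function, the $\GG$-norm is comparable (up to a constant, via polarization) to the supremum over $f$ of $\int\scal{\AAt f}{f}\,d\mu$; this norm bound is a bonus and is not needed for the inclusion itself.

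For $p\geqslant2$, take $\AAA\in L^p_\GG(\WWW,\mu,\BH)$. Applying \eqref{nejednakostVece od2} to $A=|\AAt|$ gives, for unit $f$,
$$\|\AAt f\|^p=\||\AAt|f\|^p\leqslant\scal{|\AAt|^pf}{f}.$$
The right-hand side is integrable because $|\AAA|^p$ is weakly$^*$ integrable. Rescaling to arbitrary $f$ (with factor $\|f\|^{p-2}$), we get $\int_\WWW\|\AAt f\|^p\,d\mu(t)<+\iii$ for every $f\in\HH$, so $\AAA\in L^p_s(\WWW,\mu,\BH)$. Taking suprema also yields the estimate $\|\AAA\|_{L^p_s}^p\leqslant\||\AAA|^p\|_\GG$.

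There is no real obstacle here. The only points needing care are the identity $\||\AAt|f\|=\|\AAt f\|$, the homogeneity rescaling from unit vectors to arbitrary vectors, and citing Theorem \ref{tv3} for the measurability of $|\AAA|^p$ in the case $p\in[1,2]$.
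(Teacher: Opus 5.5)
Your proof is correct and follows the paper's argument: the paper obtains both inclusions by integrating \eqref{nejednakostMece od2} and \eqref{nejednakostVece od2} applied to $|\AAA_t|$ (using $\||\AAA_t|f\|=\|\AAA_t f\|$), with weak$^*$ $\mu$-measurability of $|\AAA|^p$ supplied by Theorem \ref{tv3}. On your optional norm bound, $\||\AAA|^p\|_\GG\leqslant\|\AAA\|_{L^p_s}^p$ in fact holds with no constant: $|\scal{|\AAA_t|^pf}{g}|\leqslant\scal{|\AAA_t|^pf}{f}^{1/2}\scal{|\AAA_t|^pg}{g}^{1/2}$ together with Cauchy--Schwarz in the integral reduces the $\GG$-norm of a positive function exactly to its diagonal supremum, so polarization is not needed.
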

As we will soon see, both of the inclusions in the Proposition \ref{Implikacije} are generally strict for $p\neq2$. Also, as the following example shows, the spaces $\GG(\WWW,\mu,\BH)$ and $L_G^1(\WWW,\mu,\BH)$ are not comparable in general.

\begin{example}\label{GandL1G}
    Let $(e_n)_{n=1}^\iii$ be an o.\,n.\,b. of $\HH$. Define $\AAA,\BBB\colon\N\to\BH$ by $\AAA_n=e_1\otimes e_n^*$ and $\BBB_n=\frac{1}{n}e_n\otimes e_1^*$, for $n\in\N$. Then, $|\AAA_n|=e_n\otimes e_n^*$ and $|\BBB_n|=\frac{1}{n}e_1\otimes e_1^*$. Now, for every $f\in\HH$ we have   
    $$\sumn\scal{|\AAA_n|f}{f}=\sumn|\scal{f}{e_n}|^2=\|f\|^2<+\iii$$
    and $$\sumn|\scal{\BBB_nf}{f}|=\sumn\frac{1}{n}|\scal{e_n}{f}\scal{f}{e_1}|\leqslant|\scal{f}{e_1}|\cdot\sqrt{\frac{\pi^2}{6}}\cdot\|f\|<+\iii,$$
    implying  $\AAA\in\ell^1_G(\BH)$ and $\BBB\in\GG(\N,\BH)$. On the other hand, we have $\sumn\scal{|\BBB_n|e_1}{e_1}=\sumn\frac{1}{n}=+\iii$ and therefore $\BBB\notin\ell^1_G(\BH)$. Finally, for $h=\sumn\frac{1}{n}e_n\in\HH$ we have $$\sumn|\scal{\AAA_n h}{h}|=\sumn|\scal{e_1}{h}\scal{h}{e_n}|=\sumn|1\cdot\tfrac{1}{n}|=+\iii,$$
    proving that $\AAA\notin\GG(\N,\BH)$.
    $\hfill\triangle$
\end{example}

Since for $0<p\neq2$ there does not exist a constant $C_p$ such that for every unit  vector $f\in\HH$ holds $\scal{|A+B|^pf}{f}\leqslant C_p\cdot(\scal{|A|^pf}{f}+\scal{|B|^pf}{f})$ even when $\dim\HH=2$ (for $p=2$ the constant $C_2=2$, for the counterexample in the case $p=1$ see \cite{Lo76}), it is not straightforward whether the spaces $L^p_\GG(\WWW,\mu,\BH)$ are closed under addition. That will generally not be the case (unless, for example, when $\dim\HH<+\iii$ or  $\WWW$ is a finite set), as the following two examples show. First, we deal with the case $0<p<2$.

\begin{example}
Let $(e_n)_{n=1}^\infty$ be an orthonormal basis of $\HH$ and let $0< p < 2$.
Set $s_n=\sin\frac{1}{\sqrt[p]{n}}$ and $c_n=\cos\frac{1}{\sqrt[p]{n}}$ and define unit vectors
$u_n = s_n e_1 + c_n e_{n+1}\in\HH$ for all $n \in \N$. Now, define o.\,v. functions $\AAA,\BBB\colon\N\to\BH$ by
$$\AAA_n:=u_n\otimes u_n^*,\qquad \BBB_n:=-e_{n+1}\otimes e_{n+1}^*,\qquad n\in\N.$$ 
Then, we have $|\AAA_n|=|\AAA_n|^p=\AAA_n$ and $|\BBB_n|=|\BBB_n|^p=e_{n+1}\otimes e_{n+1}^*$. Since for every $n\in\N$ holds
$$\AAA_n = u_n \otimes u_n^* = s_n^2 e_1 \otimes e_1^* + s_nc_n ( e_1 \otimes e_{n+1}^* + e_{n+1} \otimes e_1^*)
+ c_n^2 e_{n+1} \otimes e_{n+1}^*,$$
we get
\begin{eqnarray*}
\langle |\AAA_n|^p f, f \rangle & = &\langle (u_n \otimes u_n^*)f, f \rangle = \langle f, u_n \rangle \cdot 
\langle u_n, f \rangle=|\scal{f}{u_n}|^2 \\&=& | s_n \langle f, e_1 \rangle + c_n \langle f, e_{n+1} \rangle |^2\leqslant(s_n\cdot|\scal{f}{e_1}|+|\scal{f}{e_{n+1}}|)^2 \\&\leqslant&2\cdot s_n^2\cdot\|f\|^2+2\cdot|\scal{f}{e_{n+1}}|^2.
\end{eqnarray*}
As $0<p<2$, we have $s_n^2\sim\frac{1}{n^{2/p}}$ as $n\to\iii$, and thus $\sumn s_n^2<+\iii$. By summing the previous estimate, due to Bessel's inequality we get
\begin{equation*}
    \sumn\scal{|\AAA_n|^pf}{f}\leqslant\
    2\cdot\|f\|^2\cdot\sumn s_n^2+2\cdot\|f\|^2<+\infty,
\end{equation*}
for every $f\in\HH$, implying $\AAA\in \ell^p_G(\BH)$. Next, for every $f\in\HH$ holds
\begin{equation*}
    \sumn\scal{|\BBB_n|^pf}{f}=\sumn\scal{(e_{n+1}\otimes e^*_{n+1})f}{f}=\sumn|\scal{f}{e_{n+1}}|^2\leqslant\|f\|^2<+\iii,
\end{equation*}
providing  $\BBB\in \ell^p_G(\BH)$.
Finally, as
$$\AAA_n + \BBB_n = s_n^2 e_1 \otimes e_1^* + s_nc_n ( e_1 \otimes e_{n+1}^* + e_{n+1} \otimes e_1^*)
- s_n^2 e_{n+1} \otimes e_{n+1}^*,$$
the direct computation gives us
\begin{equation*}
    \begin{split}
    |\AAA_n &+ \BBB_n|^2  =  (\AAA_n + \BBB_n)^*(\AAA_n + \BBB_n)=(\AAA_n+\BBB_n)^2 \\
& =  (s_n^2\,e_1 \otimes e_1^* + s_n\,c_n e_1 \otimes e_{n+1}^* + s_n\,c_n e_{n+1} \otimes e_1^*
- s_n^2\,e_{n+1} \otimes e_{n+1}^*) \\
&  \times (s_n^2 e_1 \otimes e_1^* + s_nc_n e_1 \otimes e_{n+1}^* + s_nc_ne_{n+1} \otimes e_1^*
- s_n^2 e_{n+1} \otimes e_{n+1}^*) \\
& =  (s_n^4+s_n^2c_n^2) e_1 \otimes e_1^* + (s_n^3c_n-s_n^3c_n) e_1 \otimes e_{n+1}^* +  (s_n^3c_n-s_n^3c_n)
e_{n+1} \otimes e_1^* \\
&  +  (s_n^4+s_n^2c_n^2)\,e_{n+1} \otimes e_{n+1}^* = s_n^2e_1\otimes e_1^*+s_n^2  e_{n+1} \otimes e_{n+1}^*,    
    \end{split}
\end{equation*}
implying the equality
$| \AAA_n + \BBB_n | = s_ne_1\otimes e_1^*+s_n  e_{n+1} \otimes e_{n+1}^*$
and so
$$|\AAA_n + \BBB_n|^p =s_n^pe_1\otimes e_1^*+s_n^pe_{n+1}\otimes e_{n+1}^*,$$
for every $n\in\N$. Thus, we have the equalities
$$\sum_{n=1}^\infty \langle |\AAA_n+\BBB_n|^pe_1,e_1\rangle=\sum_{n=1}^\infty s_n^p=\sum_{n=1}^\infty\sin^p\frac{1}{\sqrt[p]{n}}=+\infty,$$
implying $\AAA+\BBB\notin \ell^p_G(\BH)$, proving that $\ell^p_G(\BH)$ is not a vector space.  $\hfill\triangle$
\end{example}
\begin{example}
    Let $(e_n)_{n=1}^\iii$ be an orthonormal basis in $\HH$ and let $p>2$. Define $\AAA,\BBB\colon\N\rightarrow\BH$ by $\AAA_n=e_n\otimes e_n^*$ and $\BBB_n=\frac{1}{\sqrt{n}}e_n\otimes e_1^*$, for $n\in\N$. Then, $$|\AAA_n|^p=\AAA_n=e_n\otimes e_n^*,\qquad \left\||\BBB_n|^p\right\|=\|\BBB_n\|^p=\frac{1}{n^{p/2}},\qquad n\in\N.$$
    Hence, $|\AAA|^p$ is weakly$^*$ integrable (its weak$^*$ integral equals $I$). Also, since $p>2$, we have  $|\BBB|^p\in\ell^1(\BH)\subseteq\GG(\N,\BH)$. Thus, we have $\AAA,\BBB\in\ell^p_G(\BH).$ 
    Furthermore, for every $n\in\N$ holds
   \begin{align*}
\AAA_n+\BBB_n
&=
e_n\otimes e_n^*+\frac1{\sqrt n}\,e_n\otimes e_1^* =
e_n\otimes\left(e_n+\frac1{\sqrt n}\,e_1\right)^*=e_n\otimes v_n^*,
\end{align*}
for $v_n=e_n+\frac1{\sqrt n}\,e_1$.
Moreover,
$
(\AAA_n+\BBB_n)^*
=
(e_n\otimes v_n^*)^*
=
v_n\otimes e_n^*$,
and hence
\begin{align*}
(\AAA_n+\BBB_n)^*(\AAA_n+\BBB_n)
&=
(v_n\otimes e_n^*)(e_n\otimes v_n^*) =
\scal{e_n}{e_n}\, v_n\otimes v_n^* =
v_n\otimes v_n^*.
\end{align*}
By expanding $v_n\otimes v_n^*$, we obtain
\begin{align*}
v_n\otimes v_n^*
&=
\left(e_n+\frac1{\sqrt n}\,e_1\right)
\otimes
\left(e_n+\frac1{\sqrt n}\,e_1\right)^* \\
&=
e_n\otimes e_n^*
+\frac1{\sqrt n}\,e_n\otimes e_1^*
+\frac1{\sqrt n}\,e_1\otimes e_n^*
+\frac1n \,e_1\otimes e_1^*.
\end{align*}
Also, for all $n\in\N$ we have  $\|v_n\|^2=\|e_n+\tfrac{1}{\sqrt{n}}e_1\|^2=\|e_n\|^2+\tfrac{1}{n}\|e_1\|^2=1+\tfrac{1}{n}$.
Since the operator $v_n\otimes v_n^*$ is a rank-one and positive, we have
$$
(v_n\otimes v_n^*)^{p/2}
=(\|v_n\|^2)^{p/2}\left(\frac{v_n}{\|v_n\|}\otimes \left(\frac{v_n}{\|v_n\|}\right)^*\right)^{p/2}=
\|v_n\|^{p-2}\,v_n\otimes v_n^*.$$
Therefore, we get the equalities
\begin{align*}
|\AAA_n+\BBB_n|^p
&=
\bigl((\AAA_n+\BBB_n)^*(\AAA_n+\BBB_n)\bigr)^{p/2} =
(v_n\otimes v_n^*)^{p/2} \\
&=
\|v_n\|^{p-2}v_n\otimes v_n^* =
\left(1+\frac1n\right)^{\frac{p-2}{2}}
v_n\otimes v_n^* \\
&=
\left(1+\frac1n\right)^{\frac p2-1}
\left(
e_n\otimes e_n^*
+\frac1{\sqrt n}\,e_n\otimes e_1^*
+\frac1{\sqrt n}\,e_1\otimes e_n^*
+\frac1n \,e_1\otimes e_1^*
\right).
\end{align*} 
Finally, as $p>2$ we get
   $$\sum_{n=1}^\infty\scal{|\AAA_n+\BBB_n|^pe_1}{e_1}=\sum_{n=1}^\infty\left(1+\tfrac{1}{n}\right)^{\frac{p}{2}-1}\cdot\tfrac{1}{n}=+\iii,$$
thus $\AAA+\BBB\notin\ell^p_G(\BH)$, implying that $\ell^p_G(\BH)$ is not a vector space. \hfill$\triangle$
\end{example}

We note that since $L^p_G(\WWW,\mu,\BH)$ are not vector spaces in general for $p\neq2$, unlike $L^p_s(\WWW,\mu,\BH)$, the inclusions in Proposition \ref{Implikacije} can be strict.

\section{H\"{o}lder-type inequalities for operator valued functions}

Operator-valued versions of the Cauchy-Schwarz inequality have attracted the attention of researchers during the last two decades. We refer to \cite{J05, JKL20} as examples of this line of research. 

We start with the following basic result, connecting various introduced norms of o.\,v. functions.

\begin{proposition}\label{OsnovniHelder}
Let $p,q> 1$ such that $\frac{1}{p}+\frac{1}{q}=1$ and let $\BBB\in L_s^q(\Omega,\mu,\BH)$.

\noindent \textup{\textbf{(a)}} If $\AAA\in L_s^p(\Omega,\mu,\BH)$ then $\AAA^*\BBB\in \GG(\Omega,\mu,\BH)$ and we have $$\|\AAA^*\BBB\|_\GG\leqslant\|\AAA\|_{L_s^p}\cdot\|\BBB\|_{L_s^q}.$$

\noindent \textbf{\textup{(b)}} If $\AAA\in L^p(\WWW,\mu,\BH)$ then $\AAA\BBB\in L^1_s(\WWW,\mu,\BH)$ and we have
$$\|\AAA\BBB\|_{1}\leqslant\|\AAA\|_{L^p}\cdot\|\BBB\|_{L_s^q}.$$
\end{proposition}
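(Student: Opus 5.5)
The plan is to reduce both parts to the scalar H\"older inequality, applied pointwise in $t$ and then integrated. The only non-scalar inputs are the measurability facts already recorded in the paper.

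For \textbf{(a)}, I first note that $\AAA^*$ is weakly$^*$ $\mu$-measurable, as observed at the beginning of the section on $L^p_G$ spaces. By the Parseval argument mentioned there, the pointwise product $\AAA^*\BBB$ is then weakly$^*$ $\mu$-measurable as well. For unit vectors $f,g\in\HH$ and every $t\in\WWW$, the Cauchy--Schwarz inequality in $\HH$ gives
$$|\scal{\AAA_t^*\BBB_t f}{g}|=|\scal{\BBB_t f}{\AAA_t g}|\leqslant \|\BBB_t f\|\cdot\|\AAA_t g\|.$$
The functions $t\mapsto\|\AAA_t g\|$ and $t\mapsto\|\BBB_t f\|$ are $\mu$-measurable by the Pettis measurability remark following the definition of $L^p_s$. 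Integrating and applying scalar H\"older with exponents $p,q$ yields
$$\int_\WWW|\scal{\AAA_t^*\BBB_t f}{g}|\,d\mu(t)\leqslant \Big(\int_\WWW\|\AAA_t g\|^p d\mu\Big)^{1/p}\Big(\int_\WWW\|\BBB_t f\|^q d\mu\Big)^{1/q}\leqslant \|\AAA\|_{L^p_s}\|\BBB\|_{L^q_s}.$$
Taking $g=f$ gives weak$^*$ integrability, so $\AAA^*\BBB\in\GG(\WWW,\mu,\BH)$. Taking the supremum over unit $f,g$ and using the second expression for $\|\cdot\|_\GG$ gives the stated norm bound.

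For \textbf{(b)}, I read $\AAA\in L^p(\WWW,\mu,\BH)$ as Bochner $p$-integrability, so $\AAA$ is strongly $\mu$-measurable. Hence $\AAA$ is weakly$^*$ $\mu$-measurable and $t\mapsto\|\AAA_t\|$ is $\mu$-measurable. Again by the Parseval remark, $\AAA\BBB$ is weakly$^*$ $\mu$-measurable. For a unit vector $f$ we have $\|\AAA_t\BBB_t f\|\leqslant\|\AAA_t\|\cdot\|\BBB_t f\|$. Scalar H\"older then gives
$$\int_\WWW\|\AAA_t\BBB_t f\|\,d\mu(t)\leqslant \|\AAA\|_{L^p}\Big(\int_\WWW\|\BBB_t f\|^q\,d\mu\Big)^{1/q}\leqslant\|\AAA\|_{L^p}\|\BBB\|_{L^q_s}.$$
This shows $\AAA\BBB\in L^1_s(\WWW,\mu,\BH)$. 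Taking the supremum over unit $f$ gives the inequality, with $\|\cdot\|_1$ read as the $L^1_s$ norm.

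The main obstacle is bookkeeping rather than analysis: each integrand must be a measurable scalar function before the integrals are taken. In (a) this is handled by weak$^*$ measurability of $\AAA^*\BBB$ together with measurability of $t\mapsto\|\AAA_t g\|$ and $t\mapsto\|\BBB_t f\|$. In (b) it is handled by strong measurability of $\AAA$, which makes $t\mapsto\|\AAA_t\|$ measurable. After that, both parts are one-line applications of scalar H\"older.
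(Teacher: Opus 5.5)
Your proof is correct and follows essentially the same route as the paper in both parts: Cauchy--Schwarz in $\HH$ pointwise in $t$, then the scalar H\"older inequality, then a supremum over unit vectors. Your pairing $\|\AAA_t g\|\cdot\|\BBB_t f\|$ in (a) is the right one (the paper's display has $f$ and $g$ swapped at that step), and your measurability bookkeeping spells out details the paper leaves implicit.
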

\begin{proof}

\noindent\textbf{\textup(a)}
Let $f,g\in\HH$ be unit vectors. By the scalar Hölder inequality we get
\begin{multline*}
\int_\Omega|\scal{\AAA_t^*\BBB_tf}{g}|\,d\mu(t)=\int_\Omega|\scal{\BBB_tf}{\AAA_t g}|\,d\mu(t)\leqslant\int_\Omega\|\AAA_tf\|\cdot\|\BBB_tg\|\,d\mu(t)\\
=\sqrt[p]{\int_\Omega\|\AAA_tf\|^p\,d\mu(t)}\cdot\sqrt[q]{\int_\Omega\|\BBB_tg\|^q\,d\mu(t)}\leqslant \|\AAA\|_{L_s^p}\cdot\|\BBB\|_{L_s^q}.
\end{multline*}
By taking supremum over all unit vectors $f,g\in\HH$ we get the desired inequality.
\smallskip

\noindent \textbf{\textup{(b)}} Let $f\in\HH$ be an arbitrary unit vector. Then
\begin{multline*}
\int_\Omega\|\AAA_t \BBB_tf\|\,d\mu(t)\leqslant\int_\Omega\|\AAA_t\|\cdot\|\BBB_tf\|\,d\mu(t)\\\leqslant\sqrt[p]{\int_\Omega\|\AAt\|^p\,d\mu(t)}\cdot\sqrt[q]{\int_\Omega\|\BBB_tf\|^q\,d\mu(t)}\leqslant\|\AAA\|_{L^p}\cdot\|\BBB\|_{L_s^q}.\end{multline*}
By taking the supremum over $\|f\|=1$ we get the proclaimed estimate.
\end{proof}

As measurability of $\WWW\ni t\to\|\AAt\|\in\mathbb{R}$ can be deduced as in the Lemma \ref{tv2}, we note that part \textup{\textbf{(b)}} of the above proposition remains valid under the weaker assumptions of weak$^*$ measurability of $\AAA$ and $\int_\WWW\|\AAt\|^p\,d\mu(t) < +\iii$. Moreover, the following example shows that reversing the order of $\AAA$ and $\BBB$ in that part causes the inequality to fail.

\begin{example} We will use the same o.\,v. functions $\AAA,\BBB:\N\to\BH$ as in the Example \ref{GandL1G}.
Let $\AAA_n=e_1\otimes e_n^*$ and $\BBB_n=\frac{1}{n}e_n\otimes e_1^*$ for all $n\in\N$. Then, due to Bessel inequality we get
$$\sum_{n=1}^{\infty}\|\AAA_n f\|^2=\sum_{n=1}^\infty|\scal{f}{e_n}|^2\cdot\|e_1\|^2\leqslant\|f\|^2<+\infty,\qquad f\in\HH,$$implying $\AAA\in \ell_s^2(\BH)$. Also, $\sum_{n=1}^\infty\|\BBB_n\|^2=\sum_{n=1}^\infty\frac{1}{n^2}<+\infty$, thus $\BBB\in\ L^2(\N,\BH)$. 
On the other hand, we have $$\sum_{n=1}^\infty\|\AAA_n\BBB_ne_1\|=\sum_{n=1}^\infty\left\|\frac{1}{n}(e_1\otimes e_1^*)e_1\right\|=\sum_{n=1}^\infty\frac{1}{n}\,\|\scal{e_1}{e_1}e_1\| = \sumn\frac{1}{n} =+\infty,$$
hence $\AAA\BBB\notin\ell_s^1(\BH)$.
$\hfill\triangle$
\end{example}

The following theorem presents a non commutative Hölder-type inequality for $\BH$-valued functions. It can be seen as an extension of \cite[Lemma~2.1, (a1)]{JKL20} for more general exponents.

\begin{theorem}\label{OPHelder}
Let $1<q\leqslant 2\leqslant p<+\infty$ satisfy $\frac{1}{p}+\frac{1}{q}=1$ and assume that $\AAA\in L_s^q(\Omega,\mu,\BH)$ and $\BBB\in L^p_G(\Omega,\mu,\BH)$. Then $\AAA^*\BBB\in \GG(\Omega,\mu,\BH)$ and for all unit vectors $f\in\HH$ and $E\in\M$ holds
\begin{equation}\label{HelderNeKomut}
\begin{split}\!\!\scal{\left|\int_E\!\AAA^*\BBB\,d\mu \right|f}{f}&\!\leqslant\!\left\|\left(\int_E\!\AAA^*\BBB\,d\mu\right)  f\right\|\!\leqslant\!\|
\AAA\|_q\sqrt[p]{\scal{\left(\int_E|\BBB|^p\,d\mu\!\right)\!f}{f}}.
\end{split}\end{equation}
\end{theorem}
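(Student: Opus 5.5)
First, $\AAA^*\BBB\in\GG(\Omega,\mu,\BH)$. Since $p\geqslant2$, Proposition \ref{Implikacije} gives $\BBB\in L^p_G(\Omega,\mu,\BH)\subset L^p_s(\Omega,\mu,\BH)$. Proposition \ref{OsnovniHelder}(a), applied with $\AAA\in L^q_s$, $\BBB\in L^p_s$ and the roles of $p$ and $q$ interchanged, then yields $\AAA^*\BBB\in\GG(\Omega,\mu,\BH)$. Consequently $C_E:=\int_E\AAA^*\BBB\,d\mu\in\BH$ is well defined for every $E\in\M$.

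The first inequality in \eqref{HelderNeKomut} is a pointwise operator fact. For a unit vector $f$, the Cauchy--Schwarz inequality together with the identity $\||C|f\|=\|Cf\|$ (valid because $|C|^2=C^*C$) gives
\[
\scal{|C|f}{f}\leqslant\||C|f\|=\|Cf\|,\qquad C\in\BH .
\]
Apply this with $C=C_E$.

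For the second inequality, write $\|C_Ef\|=\sup_{\|g\|=1}|\scal{C_Ef}{g}|$ and bound each term using the definition of the weak$^*$ integral:
\[
|\scal{C_Ef}{g}|\leqslant\int_E|\scal{\BBB_tf}{\AAA_tg}|\,d\mu(t)\leqslant\int_E\|\AAA_tg\|\cdot\|\BBB_tf\|\,d\mu(t).
\]
The scalar Hölder inequality with exponents $q,p$ then gives
\[
|\scal{C_Ef}{g}|\leqslant\Big(\int_E\|\AAA_tg\|^q\,d\mu\Big)^{1/q}\Big(\int_E\|\BBB_tf\|^p\,d\mu\Big)^{1/p}.
\]
The first factor is at most $\|\AAA\|_{L^q_s}$, and this bound is uniform in $g$. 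For the second factor, since $p\geqslant2$, inequality \eqref{nejednakostVece od2} applied to $A=|\BBB_t|$ gives
\[
\|\BBB_tf\|^p=\||\BBB_t|f\|^p\leqslant\scal{|\BBB_t|^pf}{f}.
\]
Integrating over $E$ (this is legitimate because $|\BBB|^p$ is weakly$^*$ integrable and positive) yields
\[
\int_E\|\BBB_tf\|^p\,d\mu(t)\leqslant\scal{\Big(\int_E|\BBB|^p\,d\mu\Big)f}{f}.
\]
Taking the $p$-th root and then the supremum over $g$ gives \eqref{HelderNeKomut}.

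I expect no serious obstacle here. The one point that needs care is the use of \eqref{nejednakostVece od2}, which requires $p\geqslant2$; this explains the hypothesis $q\leqslant2\leqslant p$. The other steps to check are that $\||\BBB_t|f\|=\|\BBB_tf\|$ and that the measurability needed to integrate $\|\BBB_tf\|^p$ holds. That measurability follows from the remark after the definition of $L^p_s$ together with Theorem \ref{tv3}.
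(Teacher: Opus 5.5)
Your proof is correct and follows essentially the same route as the paper: you get $\AAA^*\BBB\in\GG(\Omega,\mu,\BH)$ from Proposition~\ref{Implikacije} and Proposition~\ref{OsnovniHelder}, then combine Cauchy--Schwarz, the scalar H\"older inequality and the inequality $\|Af\|^p\leqslant\scal{A^pf}{f}$ (for $p\geqslant2$) applied to $A=|\BBB_t|$. The only cosmetic differences are that you take the supremum over unit vectors $g$, whereas the paper substitutes $g=(\int_E\AAA^*\BBB\,d\mu)f$ and divides, and that you justify the first inequality via $\||C|f\|=\|Cf\|$, which the paper simply calls obvious.
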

\begin{proof} First inequality in \eqref{HelderNeKomut} is obviously allways true. Since $p\geqslant2$, using Proposition \ref{Implikacije} we get $\BBB\in L_s^p(\Omega,\mu,\BH)$. It follows that $\AAA^*\BBB\in \GG(\Omega,\mu,\BH)$, due to Proposition \ref{OsnovniHelder}. 
Let $f,g\in \HH$ such that $\|f\|=1$. Then we have  
\begin{multline}\label{racun}
\left|\scal{\left(\int_\Omega\AAA^*\BBB\,d\mu\right)f}{g}\right|=\left|\int_\Omega \scal{\AAA_t^*\BBB_t f}{g}d\mu(t)\right|\leqslant\int_\WWW|\scal{\AAA^*_t\BBB_tf}{g}|\,d\mu(t)\\
\leqslant\sqrt[\!q]{\int_\Omega\|\AAA_tg\|^q\,d\mu(t)}\sqrt[p]{\!\int_\Omega\|\BBB_tf\|^p\,d\mu(t)}
\leqslant\|\AAA\|_{q}\|g\|\sqrt[p]{\int_\Omega\scal{|\BBB_t|^p f}{f}d\mu(t)},
\end{multline}
where the last inequality in \eqref{racun} follows from \eqref{nejednakostVece od2}.
Finally, by choosing the vector 
$g=\left(\int_\Omega\AAA^*\BBB\,d\mu\right)f\in\HH$
in \eqref{racun}, we get 
$$\quad\quad\quad\left\|\left(\int_\Omega\AAA^*\BBB\,d\mu\right)  f\right\|^2\leqslant\|\AAA\|_{q}\cdot\left\|\left(\int_\Omega\AAA^*\BBB\,d\mu\right)  f\right\|\cdot\sqrt[p]{\int_\Omega\scal{|\BBB_t|^p f}{f}d\mu(t)},$$
implying the second inequality in \eqref{HelderNeKomut}. 
\end{proof}
As an immediate consequence of the preceding theorem, we obtain the following Jensen-type inequality.
\begin{corollary}\label{PJensen}
Let $\BBB\in L^p_G(\WWW,\mu,\BH)$ for some $p\geqslant2$. Then, for every unit vector $f\in\HH$ and $E\in\M$ of finite measure holds
\begin{equation}\label{jensen}
    \scal{\left|\int_E\BBB\,d\mu \right|f}{f}^p\leqslant\mu(E)^{p-1}\cdot\scal{\left(\int_E|\BBB|^p\,d\mu\right)f}{f}.
\end{equation}
\end{corollary}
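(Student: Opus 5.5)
We will obtain \eqref{jensen} by applying Theorem \ref{OPHelder} with a constant first function. Let $q=\frac{p}{p-1}\in(1,2]$ be the conjugate exponent of $p\geqslant2$. Since $\mu(E)<+\infty$, we define $\AAA\colon\WWW\to\BH$ by $\AAA_t=\chi_E(t)\,I$.

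The first step is to check that this $\AAA$ satisfies the hypotheses of the theorem. It is clearly weakly$^*$ $\mu$-measurable. For every unit vector $f\in\HH$ we have
$$\int_\WWW\|\AAA_tf\|^q\,d\mu(t)=\mu(E),$$
so $\AAA\in L^q_s(\WWW,\mu,\BH)$ and $\|\AAA\|_{L^q_s}=\mu(E)^{1/q}=\mu(E)^{\frac{p-1}{p}}$. Moreover $\AAA^*\BBB=\chi_E\BBB$, hence $\int_E\AAA^*\BBB\,d\mu=\int_E\BBB\,d\mu$. Theorem \ref{OPHelder} also guarantees $\AAA^*\BBB\in\GG(\WWW,\mu,\BH)$, which gives weak$^*$ integrability of $\BBB$ on $E$, so the left-hand side of \eqref{jensen} makes sense.

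The second step is to apply \eqref{HelderNeKomut} with this $\AAA$, for the given set $E$ and unit vector $f$. It yields
$$\scal{\left|\int_E\BBB\,d\mu\right|f}{f}\leqslant\mu(E)^{\frac{p-1}{p}}\sqrt[p]{\scal{\left(\int_E|\BBB|^p\,d\mu\right)f}{f}}.$$
Both sides are nonnegative, since $|\cdot|\geqslant0$ and the integral of $|\BBB|^p\geqslant0$ is positive. Raising both sides to the power $p$ gives \eqref{jensen}.

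The only point needing care is how the integral over $E$ enters. The statement of Theorem \ref{OPHelder} uses $E$, while its proof is written over $\WWW$. The same computation \eqref{racun} works verbatim with $\WWW$ replaced by $E$. Alternatively, one can note that $\chi_E\BBB\in L^p_G(\WWW,\mu,\BH)$ with $|\chi_E\BBB|^p=\chi_E|\BBB|^p$ and apply the theorem on $\WWW$. Either way no real obstacle arises.
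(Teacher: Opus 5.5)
Your proposal is correct and is the same argument as the paper's: take $\AAA=\chi_E\cdot I\in L^q_s(\WWW,\mu,\BH)$ with $\|\AAA\|_{L^q_s}=\mu(E)^{\frac{p-1}{p}}$, apply Theorem \ref{OPHelder}, and raise the resulting inequality to the $p$-th power. Your remark on reading the proof of Theorem \ref{OPHelder} over $E$ instead of $\WWW$ (or applying it to $\chi_E\BBB$) fills in a detail the paper leaves implicit.
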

\begin{proof}
    Let $\AAA=\chi_E\cdot I$ and $\frac{1}{p}+\frac{1}{q}=1$. Then, for every unit vector $f\in\HH$ holds
    $$\int_\WWW\|\AAA_tf\|^q\,d\mu(t)=\int_E\|f\|^q\,d\mu(t)=\mu(E)<+\iii,$$
    implying that $\AAA\in L^q_s(\WWW,\mu,\BH)$ and $\|\AAA\|_q=\mu(E)^\frac{1}{q}=\mu(E)^\frac{p-1}{p}$. Inequality \eqref{jensen} now follows by applying Theorem \ref{OPHelder} to $\AAA$ and $\BBB$.
\end{proof}

Building on the ideas in \cite[Theorem~3.1, (a)]{JKL20} we derive a H\"older-type inequality for o.\,v. functions in the setting of $p$-modified s.\,n. ideals. It provides sufficient conditions under which the product family $\AAA^*\BBB$ induces a countably additive vector measure with values in the ideal $\ccphipn$.

\begin{theorem}\label{HelderT}
Let $\Phi$ be an s.\,n. function, let $1<q\leqslant 2\leqslant p<+\infty$ satisfy $\frac{1}{p}+\frac{1}{q}=1$ and let  $\AAA\in L_s^q(\Omega,\mu,\BH)$ and $\BBB\in L^p_G(\Omega,\mu,\BH)$ such that $\int_\Omega|\BBB|^p\,d\mu\in\ccphin$. Then, for every $E\in\M$ we have
$$\nu_{\AAA^*\BBB}(E)=\int_E\AAA^*\BBB\,d\mu\in\ccphipn$$
and the following inequality holds
\begin{equation}\label{heldermera}
\left\|\int_E\AAA^*\BBB\,d\mu \right\|_{\Phi^{(p)}} \leqslant \|\AAA\|_{q}\cdot\left\|\sqrt[p]{\int_E|\BBB|^p\,d\mu}\right\|_{\Phi^{(p)}}.    
\end{equation}
Furthermore, the induced vector measure $\nu_{\AAA^*\BBB}\colon\M\rightarrow\ccphipn$ is countably additive.
\end{theorem}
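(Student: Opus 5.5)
The plan is to reduce everything to the pointwise vector inequality of Theorem \ref{OPHelder} and then pass to singular values through the Schmidt expansion. Fix $E\in\M$ and write $C=\int_E\AAA^*\BBB\,d\mu$ and $D_E=\int_E|\BBB|^p\,d\mu$. Since $|\BBB_t|^p\geqslant0$, monotonicity of $\nu_{|\BBB|^p}$ gives $0\leqslant D_E\leqslant D_\WWW\in\ccphin$. Hence $D_E\in\ccphi$ with $s_n(D_E)\leqslant s_n(D_\WWW)$, so $(s_n(D_E))\in\cphin$ and $D_E\in\ccphin$. By Theorem \ref{OPHelder}, for every unit vector $f$,
\[
\|Cf\|^p\leqslant\|\AAA\|_q^p\,\scal{D_Ef}{f}.
\]

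\textbf{Compactness of $C$.} For any o.\,n.\,s. $(e_n)$ the sequence $(\scal{D_Ee_n}{e_n})$ tends to $0$, because $D_E$ is compact. Therefore
\[
\scal{C^*Ce_n}{e_n}=\|Ce_n\|^2\to0 .
\]
By the Ringrose criterion used in Proposition \ref{KriterijumPhiNula}, $C^*C$ is compact, and so $C$ is compact.

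\textbf{The inequality \eqref{heldermera}.} Take the Schmidt expansion $C=\sum s_n(C)\,g_n\otimes e_n^*$. Then $\|Ce_n\|=s_n(C)$, so
\[
s_n(C)^p\leqslant\|\AAA\|_q^p\scal{D_Ee_n}{e_n}.
\]
Monotonicity of $\Phi$ and \eqref{normaphi} then give
\[
\Phi\bigl((s_n(C)^p)_n\bigr)\leqslant\|\AAA\|_q^p\,\Phi\bigl((\scal{D_Ee_n}{e_n})_n\bigr)\leqslant\|\AAA\|_q^p\|D_E\|_\Phi .
\]
Since $\|D_E\|_\Phi=\bigl\|\sqrt[p]{D_E}\bigr\|^p_{\Phi^{(p)}}$, taking $p$-th roots yields \eqref{heldermera}.

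\textbf{Membership in $\ccphipn$.} By Proposition \ref{KriterijumPhiNula}, $(\scal{D_Ee_n}{e_n})\in\cphin$, i.e. $\Phi$ of its tails tends to $0$. The termwise domination $s_n(C)^p\leqslant\|\AAA\|_q^p\scal{D_Ee_n}{e_n}$ transfers this tail property, so $(s_n(C)^p)\in\cphin$. Hence $(s_n(C))\in\cphipn$ and $C\in\ccphipn$.

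\textbf{Countable additivity.} Let $E=\bigsqcup_k E_k$ and $F_n=E\setminus\bigcup_{k\leqslant n}E_k$, so $F_n\downarrow\emptyset$. Applying \eqref{heldermera} on $F_n$ gives
\[
\Bigl\|\nu_{\AAA^*\BBB}(E)-\sum_{k\leqslant n}\nu_{\AAA^*\BBB}(E_k)\Bigr\|_{\Phi^{(p)}}\leqslant\|\AAA\|_q\,\|D_{F_n}\|_\Phi^{1/p}.
\]
So it suffices to show $\|D_{F_n}\|_\Phi\to0$. This is the main obstacle, since $\nu_{|\BBB|^p}$ is a priori only weak$^*$ countably additive. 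The available facts are:
\begin{itemize}
\item $D_{F_n}$ decreases to $0$ weakly, by monotone convergence in $\scal{D_{F_n}f}{f}$, hence strongly, since $\|D_{F_n}^{1/2}f\|^2\to0$;
\item $0\leqslant D_{F_n}\leqslant D_\WWW\in\ccphin$.
\end{itemize}

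Given $\varepsilon>0$, let $P$ be the spectral projection of $D_\WWW$ onto finitely many top eigenvectors, chosen so that $\|QD_\WWW Q\|_\Phi<\varepsilon$ with $Q=I-P$. Split
\[
D_{F_n}=PD_{F_n}P+PD_{F_n}Q+QD_{F_n}P+QD_{F_n}Q .
\]
\begin{itemize}
\item \emph{Tail term:} $\|QD_{F_n}Q\|_\Phi\leqslant\|QD_\WWW Q\|_\Phi<\varepsilon$, since $0\leqslant QD_{F_n}Q\leqslant QD_\WWW Q$.
\item \emph{Cross terms:} the Hölder inequality for $\Phi^{(2)}$ gives
\[
\|PD_{F_n}Q\|_\Phi\leqslant\|D_{F_n}^{1/2}P\|_{\Phi^{(2)}}\,\|D_{F_n}^{1/2}Q\|_{\Phi^{(2)}} .
\]
The second factor is at most $\|D_\WWW\|_\Phi^{1/2}$. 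The first tends to $0$, because $P$ has finite rank and $D_{F_n}^{1/2}\to0$ strongly.
\item \emph{Head term:} $PD_{F_n}P\to0$ in norm, being of finite rank and strongly convergent.
\end{itemize}
Hence $\limsup_n\|D_{F_n}\|_\Phi\leqslant\varepsilon$ for every $\varepsilon>0$, which proves countable additivity.
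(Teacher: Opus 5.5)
Your proposal is correct. The first half follows the paper closely: both arguments feed the vector inequality of Theorem~\ref{OPHelder} into a diagonal sequence, then use monotonicity of $\Phi$ and Theorem~\ref{Simon}. The paper applies Proposition~\ref{KriterijumPhiNula} to $|C|$ over \emph{every} o.\,n.\,s. (via $\scal{|C|f}{f}\leqslant\|Cf\|$) and only then specializes to eigenvectors of $|C|$. You instead get compactness of $C$ first from $\|Ce_n\|\to0$, work only with the right Schmidt vectors (where $\|Ce_n\|=s_n(C)$), and conclude $C\in\ccphipn$ through the singular-value characterization; this difference is cosmetic.

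The real divergence is in countable additivity. The paper simply quotes \cite[Theorem~1.3, (a)]{MMS} for the norm countable additivity of $\nu_{|\BBB|^p}$ in $\ccphin$. You prove directly that $0\leqslant D_{F_n}\leqslant D_\WWW\in\ccphin$ together with $D_{F_n}\to0$ strongly forces $\|D_{F_n}\|_\Phi\to0$, by compressing to a finite-rank spectral projection of $D_\WWW$. This makes the theorem self-contained, since you effectively reprove the positive case of the cited result. The price is the Cauchy--Schwarz inequality $\|X^*Y\|_\Phi\leqslant\|X\|_{\Phi^{(2)}}\|Y\|_{\Phi^{(2)}}$, which is standard but not established in the paper and should be cited.

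You can avoid that inequality altogether. For $X\geqslant0$ and $Q=I-P$, positivity of $(P-Q)X(P-Q)$ gives $PXQ+QXP\leqslant PXP+QXQ$, hence $X\leqslant2(PXP+QXQ)$. Therefore $\|D_{F_n}\|_\Phi\leqslant2\|PD_{F_n}P\|_\Phi+2\|QD_{F_n}Q\|_\Phi$, which uses only the monotonicity of $\|\cdot\|_\Phi$ on positive operators already recalled in the introduction.
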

\begin{proof}
Let $e\in\BB$ and $E\in\M$ be arbitrary. Due to \eqref{HelderNeKomut}, for every $n\in\N$ holds
\begin{equation*}
\scal{\left|\int_E\AAA^*\BBB\, d\mu\right|e_n}{e_n}^p\leqslant\|\AAA\|_{L^q_s}\cdot\scal{\left(\int_E|\BBB|^pd\mu\right)e_n}{e_n}.
\end{equation*}
Furthermore, as $0\leqslant\int_E|\BBB|^p\,d\mu\leqslant\int_\WWW|\BBB|^p\,d\mu$, implying that $\int_E|\BBB|^p\,d\mu\in\ccphin$, and thus $\left(\scal{\left(\int_E|\BBB|^p\,d\mu\right)e_n}{e_n}\right)_{n=1}^\iii\in\cphin$, by
Proposition \ref{KriterijumPhiNula}. 
 Therefore, we have  $\left(\scal{\left|\int_E\AAA^*\BBB\, d\mu\right|e_n}{e_n}^p\right)_{n=1}^\iii\in\cphin$, implying $\left(\scal{\left|\int_E\AAA^*\BBB\, d\mu\right|e_n}{e_n}\right)_{n=1}^\iii\in\cphipn$ and thus $\left|\int_E\AAA^*\BBB\, d\mu\right|\in\ccphipn$, again due to Proposition \ref{KriterijumPhiNula}. Moreover, by the monotonicity of the s. n. function $\Phi$ we further get
\begin{multline*}
\Phi^{(p)}\left(\left(\scal{\left|\int_E\AAA^*\BBB\,d\mu \right|e_n}{e_n}\right)_{n=1}^\iii\right)^p=\Phi\left(\left(\scal{\left|\int_E\AAA^*\BBB\,d\mu \right|e_n}{e_n}^p\right)_{n=1}^\iii\right)\\
\leqslant\|\AAA\|^p_{q}\cdot\Phi\left(\left(\scal{\left(\int_E|\BBB|^p\,d\mu\right)e_n}{e_n}\right)_{n=1}^\iii\right)\leqslant\|\AAA\|^p_{q} \cdot \left\|\int_E|\BBB|^p\,d\mu\right\|_\Phi,
\end{multline*}
where the last inequality is based on Theorem \ref{Simon}. By taking $e$ to be the o.\,n.\,s. that consists of eigenvectors of the compact operator $\left|\int_E\AAA^*\BBB\,d\mu\right|$ we get
$$\left\|\int_E\AAA^*\BBB\,d\mu\right\|_{\Phi^{(p)}}^p \leqslant \|\AAA\|^p_{q} \cdot \left\|\int_E|\BBB|^p\,d\mu\right\|_\Phi=\|\AAA\|_{q}\cdot\left\|\sqrt[p]{\int_E|\BBB|^p\,d\mu}\right\|_{\Phi^{(p)}},$$
proving the inequality \eqref{heldermera}. Now, let $(E_n)_{n=1}^\iii$ be a sequence of mutually disjoint sets in $\M$. Due to \eqref{heldermera} we get 
\begin{equation*}
\left\|\nu_{\AAA^*\BBB}\left(\bigsqcup_{k=n+1}^{\iii} E_k\right)\right\|_\phip\leqslant\|\AAA\|_q\cdot\left\|\int_{\bigsqcup\limits_{k=n+1}^{\iii} E_k}\!\!|\BBB|^p\,d\mu \right\|_\Phi^{\frac{1}{p}}, \qquad n \in \N. 
\end{equation*}
Since the ideal $\ccphin$ is separable, the vector measure $\nuBp\colon\M\rightarrow\ccphin$, defined by the expression $\nuBp(E)=\int_E|\BBB|^p\,d\mu$ for $E\in\M$, is countably additive
 due to \cite[Theorem 1.3, (a)]{MMS}. It follows that 
$$\left\|\int_{\bigsqcup\limits_{k=n+1}^{\iii} E_k}\!\!|\BBB|^p\,d\mu \right\|_\Phi=\left\|\nuBp\left(\bigsqcup\limits_{k=n+1}^{\iii} E_k\right)\right\|_\Phi=\left\|\sum_{k=n+1}^{\infty}\nuBp(E_k)\right\|_\Phi\longrightarrow0$$
as $n\to\iii$, implying the proclaimed countable additivity of $\nu_{\AAA^*\BBB}.$
\qedhere
\end{proof}

\section{Pettis integrability of function with values in separable s.\,n. ideals}

In this section we provide sufficient conditions for Pettis integrability of weakly$^*$ integrable o.\,v. functions with values in separable s.\,n. ideals. The motivation for studying Pettis integrability in this setting comes from the fact that separable s.\,n. ideals admit an explicit description of their dual spaces (see Theorem \ref{duali}). Our goal is to show that under suitable additional assumptions, the weak$^*$ integral $\int_E\AAA\,d\mu$ is in fact a $\ccphin$-Pettis integral. 

We first consider the ideal $\cci$ under the assumption that the measure $\mu$ is finite. The following theorem is a direct consequence of \cite[Proposition~3.2]{BJN} and Theorem \ref{TPetisA}. For the reader's convenience, we include a complete proof.

\begin{theorem}
Let $\mu(\WWW)<+\iii$ and $\AAA\in L_s^p(\Omega,\mu,\BH)$ for some $p>1$, such that $\AAA_t\in\cci$ for all $t\in\Omega$. Then, we have $\AAA\in\PP(\WWW,\mu,\cci)$.
\end{theorem}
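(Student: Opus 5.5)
The plan is to verify the hypotheses of Theorem \ref{TPetisA}. The measure space is finite, $\AAA_t\in\cci$ for every $t$, and $\AAA$ is weakly$^*$ $\mu$-measurable since it belongs to $L^p_s$. It therefore remains to show two things: that $\AAA$ is weakly$^*$ integrable, and that the induced measure $\nu_\AAA$ is $\mu$-continuous in the operator norm.

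\textbf{Weak$^*$ integrability.} Fix $f\in\HH$ and let $q$ be the conjugate exponent of $p$. By the scalar H\"older inequality,
$$\int_\WWW|\scal{\AAt f}{f}|\,d\mu(t)\leqslant \|f\|\int_\WWW\|\AAt f\|\,d\mu(t)\leqslant\|f\|\,\mu(\WWW)^{1/q}\left(\int_\WWW\|\AAt f\|^p\,d\mu(t)\right)^{1/p},$$
and the right-hand side is at most $\|f\|^2\mu(\WWW)^{1/q}\|\AAA\|_{L^p_s}$. This holds because the expression $\|\AAA\|_{L^p_s}$ is finite, by the Closed Graph Theorem result stated earlier.

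\textbf{$\mu$-continuity.} Let $E\in\M$ and let $f,g$ be unit vectors. Applying the same H\"older estimate on $E$ gives
$$|\scal{\nu_\AAA(E)f}{g}|\leqslant\int_E\|\AAt f\|\,d\mu(t)\leqslant\mu(E)^{1/q}\,\|\AAA\|_{L^p_s}.$$
Taking the supremum over $f$ and $g$ yields
$$\|\nu_\AAA(E)\|\leqslant \mu(E)^{1/q}\|\AAA\|_{L^p_s}.$$
Since $p>1$ forces $q<\iii$, the right-hand side tends to $0$ as $\mu(E)\to0$, uniformly in $E$. This is exactly $\mu$-continuity of $\nu_\AAA$ in the norm of $\BH$, which on $\cci$ is the relevant norm.

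With both hypotheses in place, Theorem \ref{TPetisA} gives $\AAA\in\PP(\WWW,\mu,\cci)$.

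The main point to handle is the \emph{uniform} bound: we need $\sup_{\|f\|=1}\int_\WWW\|\AAt f\|^p\,d\mu(t)<\iii$ rather than mere finiteness for each $f$. This is supplied by the finiteness of $\|\AAA\|_{L^p_s}$ from the earlier theorem.

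Note that the argument really does need $p>1$. At $p=1$ the exponent $1/q$ would be $0$, and the bound $\|\nu_\AAA(E)\|\leqslant\mu(E)^{1/q}\|\AAA\|_{L^p_s}$ would give no smallness as $\mu(E)\to0$.
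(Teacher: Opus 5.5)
Your proposal is correct and follows the paper's proof essentially verbatim. It gets weak$^*$ integrability from H\"older on a finite measure space, then uses $\|\nu_\AAA(E)\|\leqslant\|\AAA\|_{L^p_s}\,\mu(E)^{1-1/p}$ for $\mu$-continuity, and concludes with Theorem~\ref{TPetisA}; bounding $|\scal{\nu_\AAA(E)f}{g}|$ and taking a supremum over unit $f,g$ is just a slightly more explicit version of the paper's direct bound on $\|(\int_E\AAA\,d\mu)f\|$.
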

\begin{proof}
Since $p>1$ and $\mu(\WWW)<+\iii$ we have $\AAA\in L^1_s(\WWW,\mu,\BH)$, thus $\AAA$ is weakly$^*$ integrable. Due to the H\"older inequality, for arbitrary $E\in\M$, we get the estimates
\begin{equation*}
\begin{split}
\|\nu_\AAA(E)&\|=\left\|\int_E\AAA\, d\mu\right\|=\sup_{\|f\|=1}\left\|\left(\int_E\AAA\, d\mu\right)f\right\|\leqslant\sup_{\|f\|=1}\int_E\|\AAA_t f\|\,d\mu(t)
\\&\leqslant\sup_{\|f\|=1}\left(\int_E\|\AAA f\|^p\,d\mu\right)^{\frac{1}{p}}\cdot\left(\int_E1^\frac{p}{p-1}\,d\mu\right)^\frac{p-1}{p}\leqslant\|\AAA\|_{L^p_s}\cdot\mu(E)^{1-\frac{1}{p}},
\end{split}
\end{equation*}
implying that the vector measure $\nu_\AAA$ is $\mu$-continuous. The final conclusion now follows from Theorem \ref{TPetisA}.
\end{proof}
Note that the proof of the previous theorem could also be derived from \cite[Corollary~1.2.38]{ABS}.

For every $0\leqslant A\in\cci$ and $\alpha>0$ we have $0\leqslant A^\alpha \in\cci$. Thus, due to 
Theorem \ref{Gelpow}, together with the previous theorem, we get the following statement.
\begin{corollary}
Let $\mu(\WWW)<+\iii$ and $\AAA\colon\WWW\to\BH$ be weakly$^*$ integrable function such that $0\leqslant\AAA_t\in\cci$ for every $t\in\WWW$. Then, for 
$\alpha\in\left(0,\frac{1}{2}\right]$ we have $\AAA^\alpha\in\PP(\WWW,\mu,\cci)$.
\end{corollary}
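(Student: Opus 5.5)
The plan is to reduce the corollary to the theorem just proved, applied to $\AAA^\alpha$ with exponent $p=1/\alpha$. Fix $\alpha\in\left(0,\frac12\right]$ and put $p=\frac1\alpha\in[2,+\infty)$; in particular $p>1$, as that theorem requires.

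First, I will check that $\AAA^\alpha\in L_s^{p}(\WWW,\mu,\BH)$. By Theorem \ref{Gelpow}, the assumptions that $\AAA$ is weakly$^*$ integrable and $\AAA_t\geqslant0$ for all $t$ give exactly this. The weak$^*$ $\mu$-measurability of $\AAA^\alpha$ is supplied there by Theorem \ref{tv3}, or directly by Lemma \ref{tv2}. Theorem \ref{Gelpow} also gives the norm bound $\|\AAA^\alpha\|_{L_s^{1/\alpha}}\leqslant\|\AAA\|_\GG^\alpha$, although only membership is needed here.

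Second, I will check the pointwise compactness condition $\AAA_t^\alpha\in\cci$ for every $t\in\WWW$. This is the remark stated just before the corollary: for $0\leqslant A\in\cci$ and $\alpha>0$ we have $A^\alpha\in\cci$. If one wants a justification, take the spectral (Schmidt) decomposition $A=\sum_n s_n(A)\,e_n\otimes e_n^*$ with $s_n(A)\to0$. Then $A^\alpha=\sum_n s_n(A)^\alpha\,e_n\otimes e_n^*$, and $s_n(A)^\alpha\to0$, so this series is a norm limit of finite-rank operators. Equivalently, $x\mapsto x^\alpha$ is uniformly approximable on $\sigma(A)\subset[0,\|A\|]$ by polynomials vanishing at $0$, and the compact operators form a closed ideal.

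Finally, $\mu(\WWW)<+\infty$ is assumed, so all hypotheses of the preceding theorem hold for the function $\AAA^\alpha$ with exponent $p=1/\alpha>1$. That theorem then yields $\AAA^\alpha\in\PP(\WWW,\mu,\cci)$.

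There is no real obstacle in this argument. The only point needing care is that Theorem \ref{Gelpow} is stated for $\alpha\le\frac12$, which is precisely what ensures $p=1/\alpha\geqslant2>1$ and makes the preceding theorem applicable.
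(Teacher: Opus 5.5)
Your proposal is correct and is the paper's argument: Theorem \ref{Gelpow} gives $\AAA^\alpha\in L_s^{1/\alpha}(\WWW,\mu,\BH)$, and the preceding theorem then applies with $p=1/\alpha\geqslant 2>1$, using that $A^\alpha\in\cci$ whenever $0\leqslant A\in\cci$. Your spectral-decomposition justification of that last fact supplies a detail the paper only asserts.
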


Now, we turn our focus on o.\,v. functions with values in arbitrary separable s.\,n. ideals. In the sequel, we will not assume that the measure $\mu$ is finite. The following theorem extends the result of Theorem \ref{TPetisB} to all separable ideals of compact operators.
\begin{theorem}\label{Tpetispoz}
Let $\Phi$ be an s.\,n. function and  $\AAA\colon\WWW\rightarrow\BH$ be weakly$^*$ integrable function, such that $0\leqslant\AAA_t\in\ccphin$ for all $t\in\WWW$. Then, we have $\AAA\in\PP\left(\WWW,\mu,\ccphin\right)$ if and only if $\int_\WWW\AAA\, d\mu\in\ccphin$ and in this case holds
\begin{equation}\label{petisnormapoz}
\|\AAA\|_\PP=\left\|\int_\WWW\AAA\,d\mu\right\|_\Phi.
\end{equation}
\end{theorem}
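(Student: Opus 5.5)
The forward implication and the inequality ``$\geqslant$'' in \eqref{petisnormapoz} are immediate. If $\AAA\in\PP(\WWW,\mu,\ccphin)$, then by Lemma \ref{petisjegeljfand} its Pettis integral over $\WWW$ coincides with the weak$^*$ integral. Hence $\int_\WWW\AAA\,d\mu\in\ccphin$, and $\|\AAA\|_\PP\geqslant\|\int_\WWW\AAA\,d\mu\|_\Phi$ by the general fact $\|\varphi\|_\PP\geqslant\|\varphi_E\|_X$.

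For the converse I will apply Theorem \ref{boljaVerzija}. So assume $\int_\WWW\AAA\,d\mu\in\ccphin$.

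First, positivity gives $0\leqslant\nu_\AAA(E)\leqslant\int_\WWW\AAA\,d\mu$ for every $E\in\M$. Therefore $\nu_\AAA(E)\in\ccphin$ by the ideal property of positive operators recalled in the introduction.

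It remains to show that $\nu_\AAA\colon\M\to\ccphin$ is countably additive. This is exactly the step used in the proof of Theorem \ref{HelderT}, namely \cite[Theorem 1.3, (a)]{MMS}, applied there to $|\BBB|^p$ with $\int_\WWW|\BBB|^p\,d\mu\in\ccphin$. Here it is applied to the positive function $\AAA$ with $\int_\WWW\AAA\,d\mu\in\ccphin$.

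If I had to argue directly, I would take disjoint $E_k$ and set $R_n=\nu_\AAA(\bigsqcup_{k>n}E_k)$. Then $R_n\geqslant 0$ and $R_n$ decreases weakly$^*$ to $0$ by weak$^*$ countable additivity. Moreover, $R_n\leqslant\int_\WWW\AAA\,d\mu=:T\in\ccphin$. Writing $R_n=T^{1/2}C_nT^{1/2}$ with $0\leqslant C_n\leqslant I$ and $C_n\to0$ strongly on $\overline{\mathcal R(T)}$, \cite[Theorem~III.6.3]{GK} would give $\|R_n\|_\Phi\to0$. This convergence step is the only delicate point, and I expect to simply cite \cite{MMS}.

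Theorem \ref{boljaVerzija} then yields $\AAA\in\PP(\WWW,\mu,\ccphin)$, with Pettis integrals $\nu_\AAA(E)$.

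For the reverse inequality in \eqref{petisnormapoz}, identify $(\ccphin)^*$ with $\ccphiz$ via Theorem \ref{duali} (for $\Phi\sim\ell^1$ use $\mathcalb{B}_{\Phi^*}(\HH)$ in the same way). Fix $X$ with $\|X\|_{\Phi^*}=1$ and let $\Omega_j=\{t:\tr(\AAt X)\in Q_j\}$ for the four closed quadrants $Q_j$ of $\C$. These sets are measurable since $t\mapsto\tr(\AAt X)$ is measurable. Then
\[
\int_\WWW|\tr(\AAt X)|\,d\mu(t)\leqslant\sqrt2\sum_j\Bigl|\tr\bigl(\nu_\AAA(\Omega_j)X\bigr)\Bigr|,
\]
which only gives a constant, so this crude route is insufficient and positivity must be used.

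Instead write $X=U|X|$ and use $|\tr(\AAt X)|=|\tr(\AAt^{1/2}\,U|X|\,\AAt^{1/2})|$. By Cauchy--Schwarz for the trace,
\[
|\tr(\AAt X)|\leqslant\tr\bigl(\AAt\,|X^*|\bigr)^{1/2}\,\tr\bigl(\AAt\,|X|\bigr)^{1/2}\leqslant\tfrac12\Bigl(\tr(\AAt|X^*|)+\tr(\AAt|X|)\Bigr).
\]
Integrating and using that functionals commute with the Pettis integral gives
\[
\int_\WWW|\tr(\AAt X)|\,d\mu(t)\leqslant\tfrac12\Bigl(\tr\bigl(T|X^*|\bigr)+\tr\bigl(T|X|\bigr)\Bigr)\leqslant\|T\|_\Phi,
\]
since $\||X|\|_{\Phi^*}=\||X^*|\|_{\Phi^*}=1$ and Theorem \ref{duali} applies. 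Taking the supremum over $X$ yields \eqref{petisnormapoz}.
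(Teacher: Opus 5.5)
Your proof is correct, but it is organized differently from the paper's proof. For the converse, you get Pettis integrability from Theorem~\ref{boljaVerzija} together with the countable additivity of $\nu_\AAA$ from \cite[Theorem~1.3, (a)]{MMS}. This is exactly the alternative the authors mention in the remark after the theorem; their main proof avoids that result.

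The paper splits into two cases. It treats $\Phi\sim\ell^1$ separately, via Bochner integrability and \cite[Lemma~2.1]{MMS}. For $\Phi\nsim\ell^1$ it uses:
\begin{itemize}
\item the Schmidt expansion $X=\sum_n s_n(X)\,e_n\otimes f_n^*$;
\item the bound $|\scal{\AAt e_n}{f_n}|\leqslant\scal{\AAt e_n}{e_n}^{1/2}\scal{\AAt f_n}{f_n}^{1/2}$;
\item Cauchy--Schwarz for sums and for integrals, and the abstract H\"older inequality.
\end{itemize}
Together these give $\int_\WWW|\tr(\AAt X)|\,d\mu(t)\leqslant\|X\|_{\Phi^*}\|\int_\WWW\AAA\,d\mu\|_\Phi$. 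This single estimate yields both weak integrability and the norm bound. The Pettis identity is then checked for $X\geqslant0$ by interchanging sum and integral, which positivity justifies. It is extended to all $X$ by writing $X$ as a combination of four positive operators.

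Your inequality $|\tr(\AAt X)|\leqslant\tr(\AAt|X^*|)^{1/2}\tr(\AAt|X|)^{1/2}$ is the coordinate-free form of their chain. Indeed, $\sum_n s_n(X)\scal{\AAt e_n}{e_n}=\tr(\AAt|X^*|)$ and $\sum_n s_n(X)\scal{\AAt f_n}{f_n}=\tr(\AAt|X|)$. Using AM--GM pointwise instead of Cauchy--Schwarz for integrals gives the same constant.

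The trade-off is as follows. The paper's route is self-contained: it never needs the $\|\cdot\|_\Phi$-countable additivity that you flag as delicate. Yours is shorter given \cite{MMS}. It also proves \eqref{petisnormapoz} uniformly, because the polar decomposition does not require $X$ to be compact. This includes $\Phi\sim\ell^1$ with dual $\mathcalb{B}_{\Phi^*}(\HH)$. In that case the paper's written argument only shows Bochner integrability, which by itself gives $\|\AAA\|_\PP\leqslant\int_\WWW\|\AAt\|_\Phi\,d\mu(t)$ rather than the identity.

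The quadrant paragraph is a dead end and can simply be removed.
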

\begin{proof}
If the function $\AAA$ is $\ccphin$-Pettis integrable, the conclusion follows directly from Lemma \ref{petisjegeljfand}. Now let $\int_\WWW\AAA\,d\mu\in\ccphin$. In the case where $\Phi\sim\ell^1$, we have that $0\leqslant\AAA_t\in\ccphi=\ccj$ and by applying \cite[Lemma~2.1]{MMS} we get $ \int_\WWW\|\AAA_t\|_\Phi \,d\mu(t)\leqslant\int_\WWW\|\AAA_t\|_1\,d\mu(t)<+\iii$. Moreover, due to Lemma \ref{glavnaPomoc}, $\AAA$ is $\ccphi$-strongly $\mu$-measurable and thus $$\AAA\in L^1(\WWW,\mu,\ccphi)\subseteq\PP(\WWW,\mu,\ccphi).$$ 
Now we deal with the case where  $\Phi\nsim\ell^1$, and thus $(\ccphin)^*\cong\ccphiz$. 
Let $X=\sumn s_n(X)\,e_n\otimes f_n^*\in\ccphiz$ be arbitrary. Due to the formula \eqref{trAX} and the Cauchy-Schwarz inequalities for integrals, sums, and quadratic forms induced by positive operators $\AAt$ we get
\begin{align} \notag   \int_\WWW&|\tr(\AAA_tX)|\,d\mu(t)\leqslant\int_\WWW\sumn s_n(X)\cdot|\scal{\AAA_te_n}{f_n}|\,d\mu(t)\\
\notag\leqslant& \int_\WWW\sumn s_n(X)\sqrt{\scal{\AAA_te_n}{e_n}\scal{\AAA_tf_n}{f_n}}\,d\mu(t)\\\notag\leqslant&\int_\WWW\sqrt{\sumn s_n(X)\scal{\AAA_te_n}{e_n}}\cdot\sqrt{\sumn s_n(X)\scal{\AAA_tf_n}{f_n}}\,d\mu(t)
\\\leqslant
&\sqrt{\int_\WWW\sumn s_n(X)\scal{\AAA_te_n}{e_n} d\mu(t)}\cdot\sqrt{\int_\WWW\sumn s_n(X)\scal{\AAA_tf_n}{f_n}d\mu(t)}.\label{tragracun}
\end{align}
Let us denote $A=\int_\WWW\AAA\,d\mu$. Since $0\leqslant A\in\ccphin$ we get
\begin{multline*}
    \int_\WWW\sumn s_n(X)\scal{\AAt e_n}{e_n} d\mu(t)=\sumn s_n(X)\scal{Ae_n}{e_n}\\\leqslant\Phi^*((s_n(X))_{n=1}^\iii)\cdot\Phi((\scal{Ae_n}{e_n})_{n=1}^\iii)\leqslant\|X\|_{\Phi^*}\cdot\|A\|_\Phi<+\iii,
\end{multline*}
where the first inequality follows from \eqref{abstraktnihelder}. Similarly, we have
$$\int_\WWW\sumn s_n(X)\scal{\AAt f_n}{f_n}d\mu(t)\leqslant\|X\|_{\Phi^*}\cdot\|A\|_\Phi<+\iii,$$ 
so by applying the previous two estimates in \eqref{tragracun} we get
\begin{equation}\label{petisNormaPoz}\int_\WWW|\tr(\AAt X)|\,d\mu(t)\leqslant\sqrt{\|X\|_{\Phi^*}\|A\|_\Phi}\sqrt{\|X\|_{\Phi^*}\|A\|_\Phi}=\|X\|_{\Phi^*}\|A\|_\Phi<+\iii,\end{equation}
which proves that the function $\AAA$ is weakly $\ccphin$-integrable. 

To prove Pettis integrability, let $0\leqslant X\in\ccphiz$ and  $E\in\M$ be arbitrary. Then, $X=\sumn s_n(X)\,e_n\otimes e_n^*$ for some $e\in\BB$. For $A_E:=\int_E\AAA\,d\mu$ holds $0\leqslant A_E\leqslant A\in\ccphin$, implying that $A_E\in\ccphin$. Due to trace formula \eqref{trAX} we get
\begin{multline}\label{petispoz}
\tr(A_EX)=\sumn s_n(X)\scal{A_Ee_n}{e_n}=\sumn s_n(X)\int_E\scal{\AAt e_n}{e_n}d\mu(t)\\=\int_E\sumn s_n(X)\scal{\AAt e_n}{e_n}d\mu(t)=\int_E\tr(\AAt X)\,d\mu(t),
\end{multline}
where interchanging of summation and integration is allowed due to positivity of operators $\AAt$. Since the ideal $\ccphiz$ is closed under taking adjoint and absolute value of operator, every $X\in\ccphiz$ can be written as a linear combination of four positive operators from $\ccphiz$. Therefore, \eqref{petispoz} holds for every $X\in\ccphiz$, proving the proclaimed Pettis integrability. 
Finally, due to \eqref{petisNormaPoz}, we have the inequality $\|\AAA\|_\PP\leqslant\left\|\int_\WWW\AAA\,d\mu\right\|_\Phi$. Since the converse inequality is always true, we have proved the equality \eqref{petisnormapoz}. \qedhere
\end{proof}

\begin{remark}
Note that we could obtain Pettis integrability from Theorem \ref{boljaVerzija}, as for every $E\in\M$ holds $0\leqslant\int_E\AAA\,d\mu\leqslant\int_\WWW\AAA\,d\mu\in\ccphin$ and the induced vector measure is countably additive due to \cite[Theorem~1.3, (a)]{MMS}. Also we
note that we proved Bochner integrability in the case where $\Phi\sim\ell^1$. 
\end{remark}

In the case where $\Phi\nsim\ell^1$ the Bochner integrability is not necessary, as the following example shows. 

\begin{example}
    Let $\Phi\nsim\ell^1$ and let $a\in\cphin\setminus\ell^1$ (such element exists as $\ell^1$ is the largest s.\,n. function, thus inducing the smallest sequence space) be a positive and decreasing sequence and define o.\,v. function $\AAA\colon\N\rightarrow\BH$ by the expression $\AAA_n=a_n\cdot(e_n\otimes e_n^*)$ for $n\in\N$. Then, we have $0\leqslant\AAA_n\in\KH\subset\ccphin$ for every $n\in\N$ and $$\int_\N\|\AAt\|_\Phi\, d\mu(t)=\sumn\|\AAA_n\|_\Phi=\sumn a_n=+\iii,$$ hence $\AAA$ is not $\ccphin$-Bochner integrable. On the other hand, we have $$\int_\N\AAA d\mu=\sumn a_n\cdot(e_n\otimes e_n^*)\in\ccphin,$$as the sequence $a=\left(s_n\left(\int_\N\AAA d\mu\right)\right)_{n=1}^\iii$ belongs to $\cphin$. $\hfill\triangle$
\end{example}

In Theorem \ref{HelderT} we assumed that $\int_E|\BBB|^p\,d\mu\in\ccphin$ for every $E\in\M$. However, this does not imply that $|\BBB_t|^p\in\ccphin$ for any $t\in\WWW$, as can be seen in \cite[Example~2.2]{MMS}. Under this additional assumption we get $\ccphipn$-Pettis integrability of $\AAA^*\BBB$ with its Pettis norm estimate.

\begin{theorem}\label{A*BPetis}
 Let $\Phi$ be an s.\,n. function, let $1<q\leqslant 2\leqslant p<+\infty$ satisfy $\frac{1}{p}+\frac{1}{q}=1$ and let  $\AAA\in L_s^q(\Omega,\mu,\BH)$ and $\BBB\in L^p_G(\Omega,\mu,\BH)$ such that $\BBB_t\in\ccphipn$ for every $t\in\WWW$ and   $\int_\Omega|\BBB|^p\,d\mu\in\ccphin$.    Then, $\AAA^*\BBB\in\PP\left(\WWW,\mu,\ccphipn\right)$ and we have the following estimate
\begin{equation}\label{petisNormaAB}
\|\AAA^*\BBB\|_{\PP\left(\WWW,\mu,\ccphipn\right)}\leqslant\|\AAA\|_{L^q_s}\cdot\left\|\,|\BBB|^p\,\right\|^\frac{1}{p}_{\PP\left(\WWW,\mu,\ccphin\right)}.
\end{equation}
\end{theorem}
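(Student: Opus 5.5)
Pettis integrability will come from Theorem \ref{boljaVerzija}, applied with the s.\,n. function $\Phi^{(p)}$ in place of $\Phi$. The norm estimate \eqref{petisNormaAB} will then follow from \eqref{heldermera} combined with the Pettis norm identity \eqref{petisnormapoz} of Theorem \ref{Tpetispoz}.

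\textbf{Step 1: hypotheses of Theorem \ref{boljaVerzija}.} By Theorem \ref{OPHelder}, $\AAA^*\BBB\in\GG(\WWW,\mu,\BH)$, so $\AAA^*\BBB$ is weakly$^*$ integrable. Since $\BBB_t\in\ccphipn$ and $\ccphipn$ is a two-sided ideal, $\AAA_t^*\BBB_t\in\ccphipn$ for every $t\in\WWW$. Theorem \ref{HelderT} gives $\int_E\AAA^*\BBB\,d\mu\in\ccphipn$ for every $E\in\M$, and also that $\nu_{\AAA^*\BBB}\colon\M\to\ccphipn$ is countably additive. Hence Theorem \ref{boljaVerzija}, with $\Phi^{(p)}$, yields $\AAA^*\BBB\in\PP(\WWW,\mu,\ccphipn)$. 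By Lemma \ref{petisjegeljfand}, its Pettis integrals are the weak$^*$ integrals $\int_E\AAA^*\BBB\,d\mu$.

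\textbf{Step 2: Pettis integrability of $|\BBB|^p$.} Note $\BBB_t\in\ccphipn$ means exactly $(s_n(\BBB_t)^p)_{n=1}^\iii\in\cphin$. Since $s_n(|\BBB_t|^p)=s_n(\BBB_t)^p$, this gives $0\leqslant|\BBB_t|^p\in\ccphin$. Also, $|\BBB|^p$ is weakly$^*$ integrable because $\BBB\in L^p_G$, and $\int_\WWW|\BBB|^p\,d\mu\in\ccphin$. So Theorem \ref{Tpetispoz} gives $|\BBB|^p\in\PP(\WWW,\mu,\ccphin)$ with
\[
\left\||\BBB|^p\right\|_{\PP}=\left\|\int_\WWW|\BBB|^p\,d\mu\right\|_\Phi .
\]
This is the only place where the hypothesis $\BBB_t\in\ccphipn$ is essential beyond Step 1.

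\textbf{Step 3: the Pettis norm estimate (main obstacle).} The Pettis norm is a supremum of $\int_\WWW|\tr(\AAA^*_t\BBB_tX)|\,d\mu(t)$ over the unit ball of the dual, not a supremum of $\|\nu(E)\|$. The approach is to use the standard equivalence, valid for any Pettis integrable $\varphi$ and $x^*$ in the dual:
\[
\int_\WWW|\scal{\varphi}{x^*}|\,d\mu\leqslant 4\sup_{E}\|\varphi_E\|
\]
(or the exact formula via splitting into real and imaginary positive and negative parts). This gives only a constant $4$, which is not what \eqref{petisNormaAB} claims.

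To obtain the sharp bound, I would handle a fixed functional directly. Fix $X$ in the dual ball, and let $u(t)$ be a measurable unimodular scalar with $|\tr(\AAA^*_t\BBB_tX)|=u(t)\tr(\AAA^*_t\BBB_tX)$. Replace $\AAA$ by $\widetilde\AAA_t=\overline{u(t)}\AAA_t$. Then $\widetilde\AAA$ remains weakly$^*$ measurable, with $\|\widetilde\AAA\|_{L^q_s}=\|\AAA\|_{L^q_s}$. Consequently
\[
\int_\WWW|\tr(\AAA^*_t\BBB_tX)|\,d\mu(t)=\tr\left(\left(\int_\WWW\widetilde\AAA^*\BBB\,d\mu\right)X\right).
\]
Here Step 1 applied to $\widetilde\AAA$ justifies moving the trace inside the integral. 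By Theorem \ref{duali} and \eqref{heldermera} with $E=\WWW$, the right-hand side is at most
\[
\|\AAA\|_{L^q_s}\left\|\int_\WWW|\BBB|^p\,d\mu\right\|_\Phi^{1/p}.
\]
Taking the supremum over $X$ and using Step 2 gives \eqref{petisNormaAB}.

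The delicate points are the measurability of $u$, which follows from the strong measurability of $\AAA^*\BBB$ given by Lemma \ref{glavnaPomoc}, and checking that the proof of \eqref{heldermera} uses only $\|\AAA\|_{L^q_s}$.
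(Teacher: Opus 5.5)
Your proposal is correct and follows essentially the paper's proof: Pettis integrability via Theorems \ref{HelderT} and \ref{boljaVerzija} (with $\Phi^{(p)}$), Theorem \ref{Tpetispoz} for $|\BBB|^p$, and the sharp bound from absorbing a measurable unimodular phase and then applying \eqref{heldermera} with $E=\WWW$. The only difference is where the phase goes. You put it on $\AAA$, using $\|\overline{u}\AAA\|_{L^q_s}=\|\AAA\|_{L^q_s}$ and re-running your Step 1. The paper puts it on $\BBB$, using $|\varphi\BBB|=|\BBB|$ and citing Pettis's result that bounded measurable multipliers preserve Pettis integrability.
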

\begin{proof}
We first note that $|\BBB|^p$ satisfies the conditions of Theorem \ref{Tpetispoz}. Thus, $|\BBB|^p\in\PP\left(\WWW,\mu,\ccphin\right)$ and $\||\BBB|^p\|_\PP=\left\|\int_\WWW|\BBB|^p\,d\mu\right\|_\Phi$.

Next, due to Theorem \ref{HelderT} we have that the measure $\nu_{\AAA^*\BBB}\colon\M\to\ccphipn$ given by $\nu_{\AAA^*\BBB}(E) = \int_E \AAA^*\BBB\, d\mu \in \ccphipn$ is countably additive. Also, from the assumptions of theorem, we have $\AAA_t^*\BBB_t\in\ccphipn$ for every $t\in\WWW$. Hence, all the conditions for Theorem \ref{boljaVerzija} are fulfilled and therefore $\AAA^*\BBB\in \PP(\WWW,\mu,\ccphip)$.

To prove the norm estimate, let $X\in\ccphipz$ such that $\|X\|_{\phip^*} = 1$. Next, let $\varphi\colon\WWW\to\mathbb{C}$ be $\mu$-measurable function defined by
$$\varphi(t) = \exp(-i\arg(\tr(\AAt^*\BBB_t X))),\,\,\,\text{for all $t\in\WWW.$}$$
As $|\varphi(t)| = 1$ for all $t\in\Omega$, the o.\,v. function $\varphi\cdot\AAA^*\BBB$ is also Pettis integrable due to \cite[Corollary~3.41]{Pet}. Again, due to Theorem
\ref{HelderT} we have
\begin{align*}
&\int_\WWW |\tr(\AAt^*\BBB_t X)|\,d\mu (t)= \int_\WWW \tr(\AAt^*\BBB_t X) \varphi(t)\, d\mu(t) = \int_\WWW \tr(\AAt^*\varphi(t)\BBB_t X)  \,d\mu(t)\\
=& \tr\left(\left(\int_\WWW\AAA^* (\varphi\BBB)\,d\mu\right) X\right) \leqslant\left\|\int_\WWW\AAA^*(\varphi \BBB)\,  d\mu \right\|_{\phip}
\!\!\!\!\leqslant \|\AAA\|_{L^q_s}\cdot\left\|\sqrt[p]{\int_\WWW|\varphi\BBB|^p\,d\mu}\right\|_{\Phi^{(p)}}\\ =& \|\AAA\|_{L^q_s}\cdot\left\|\sqrt[p]{\int_\WWW|\BBB|^p\,d\mu}\right\|_{\Phi^{(p)}}\!\!\!\!=\|\AAA\|_{L^q_s}\cdot\left\|\int_\WWW|\BBB|^p\,d\mu\right\|^\frac{1}{p}_\Phi\!\!\!=\|\AAA\|_{L^q_s}\cdot\||\BBB|^p\|^\frac{1}{p}_{\PP\left(\WWW,\mu,\ccphin\right)},
\end{align*}
where the first equality in the third line holds since $$|\varphi(t) \BBB_t|^2 = \BBB_t^*\overline{\varphi(t)}\varphi(t)\BBB_t =|\varphi(t)|^2 \BBB_t^*\BBB_t = |\BBB_t|^2,\qquad t\in\Omega.$$ By taking supremum over $\|X\|_{\phip^*} = 1$ we get the desired estimate \eqref{petisNormaAB}
\end{proof}
Finally, as the direct consequence of the previous theorem, we derive a Jensen-type norm inequality for separable s.\,n. ideals. 
\begin{corollary}
    Let $\BBB\colon\WWW\to\BH$ be weakly$^*$ $\mu$-measurable function such that $|\BBB|^p\in\PP\left(\WWW,\mu,\ccphin\right)$, where $\mu(\WWW)=1$ and  $p\geqslant2$. Then, we have    $\BBB\in\PP\left(\WWW,\mu,\ccphipn\right)$ and the norm estimate
    \begin{equation}\label{jensen}
        \left\|\BBB\right\|^p_{\PP\left(\WWW,\mu,\ccphipn\right)}\leqslant\||\BBB|^p\|_{\PP\left(\WWW,\mu,\ccphin\right)}.
    \end{equation}
\end{corollary}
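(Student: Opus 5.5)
The plan is to apply Theorem \ref{A*BPetis} with $\AAA:=\chi_\WWW\cdot I$, that is $\AAA_t=I$ for all $t\in\WWW$, and with $q=\frac{p}{p-1}\in(1,2]$. Since $\mu(\WWW)=1$, we have $\int_\WWW\|\AAA_tf\|^q\,d\mu(t)=\|f\|^q$, so $\AAA\in L^q_s(\WWW,\mu,\BH)$ and $\|\AAA\|_{L^q_s}=1$. Then $\AAA^*\BBB=\BBB$, and the estimate \eqref{petisNormaAB} becomes exactly \eqref{jensen} after raising to the $p$-th power. So the work reduces to verifying the remaining hypotheses of Theorem \ref{A*BPetis}: that $\BBB\in L^p_G(\WWW,\mu,\BH)$, that $\BBB_t\in\ccphipn$ for every $t$, and that $\int_\WWW|\BBB|^p\,d\mu\in\ccphin$.

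First, $|\BBB|^p$ is weakly$^*$ $\mu$-measurable by Theorem \ref{tv3}. Since $|\BBB|^p\in\PP(\WWW,\mu,\ccphin)$, Lemma \ref{petisjegeljfand} gives that $|\BBB|^p$ is weakly$^*$ integrable, i.e. $\BBB\in L^p_G(\WWW,\mu,\BH)$. The same lemma shows that $\int_\WWW|\BBB|^p\,d\mu$ is the Pettis integral, hence it lies in $\ccphin$. Moreover, Theorem \ref{Tpetispoz} applies to $|\BBB|^p$, so the right-hand side of \eqref{petisNormaAB} is $\|\int_\WWW|\BBB|^p\,d\mu\|_\Phi^{1/p}$, consistent with the claimed bound.

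The main obstacle is pointwise membership: we need $\BBB_t\in\ccphipn$ for every $t$. The hypothesis only gives $|\BBB_t|^p\in\ccphin$, and only for the values actually taken by the Pettis integrable function, which may hold merely almost everywhere under the identification. If $|\BBB_t|^p\in\ccphin$ holds for every $t$, then by \cite[Theorem~2.7, (b)]{S} the sequence $(s_n(\BBB_t)^p)_n=(s_n(|\BBB_t|^p))_n$ lies in $\cphin$. This means $(s_n(\BBB_t))_n\in\cphipn$, so $\BBB_t\in\ccphipn$. Alternatively, one can apply Proposition \ref{KriterijumPhiNula} to $|\BBB_t|$, using $\scal{|\BBB_t|e_n}{e_n}^p\leqslant\scal{|\BBB_t|^pe_n}{e_n}$ for $p\geqslant1$ (McCarthy), and then use $\BBB_t=U_t|\BBB_t|$ together with the ideal property.

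If $|\BBB_t|^p\in\ccphin$ is only known almost everywhere, I would modify $\BBB$ on a $\mu$-null set by setting it to $0$ there. This changes neither the weak$^*$ integrals nor the Pettis class, so the conclusion for the modified function transfers to $\BBB$ under the identification in $\PP$. With all hypotheses checked, Theorem \ref{A*BPetis} yields $\BBB\in\PP(\WWW,\mu,\ccphipn)$ together with \eqref{jensen}.
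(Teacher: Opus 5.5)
Your proposal is correct and follows the paper's proof. Like the paper, you take $\AAA_t=I$ and $q=\frac{p}{p-1}$, note that $\|\AAA\|_{L^q_s}=1$ because $\mu(\WWW)=1$, and apply Theorem \ref{A*BPetis}, with the hypotheses supplied by Lemma \ref{petisjegeljfand}. You are more careful than the paper in justifying $\BBB\in L^p_G(\WWW,\mu,\BH)$ and the implication $|\BBB_t|^p\in\ccphin\Rightarrow\BBB_t\in\ccphipn$ via singular values, which the paper attributes loosely to Lemma \ref{petisjegeljfand}.
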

\begin{proof}
    Let $\AAA\colon\WWW\to\BH$ be defined by $\AAt=I$  for all $t\in\WWW$ and let $q=\frac{p}{p-1}$. Since $\mu(\WWW)=1$, as in the proof of Corollary \ref{PJensen} we get $\AAA\in L^q(\WWW,\mu,\BH)$ and $\|\AAA\|_{L_s^q}=1$. Next, as $|\BBB|^p$ is $\ccphin$-Pettis integrable, we have  $\BBB_t\in\ccphipn$ for all $t\in\WWW$, as well as $\int_\WWW|\BBB|^p\,d\mu\in\ccphin$, due to Lemma \ref{petisjegeljfand}. By applying Theorem \ref{A*BPetis} we get that the function $\BBB=\AAA^*\BBB$ is $\ccphipn$-Pettis integrable and 
\begin{multline*}
\|\BBB\|_{\PP\left(\WWW,\mu,\ccphipn\right)}=\|\AAA^*\BBB\|_{\PP\left(\WWW,\mu,\ccphipn\right)}\\\leqslant\|\AAA\|_{L_s^q}\cdot\||\BBB|^p\|^\frac{1}{p}_{\PP\left(\WWW,\mu,\ccphin\right)}=\||\BBB|^p\|^\frac{1}{p}_{\PP\left(\WWW,\mu,\ccphin\right)},
\end{multline*}
completing the proof.
\end{proof}

\section*{Acknowledgements}
The authors of this research are partially supported by Ministry of  Science, Technological Development and Innovation, Republic of Serbia through grant \texttt{451-03-33/2026-03/200104}.

\section*{Author contributions} All the authors wrote the manuscript text and reviewed the manuscript.

\section*{Data availability statement} No datasets were generated or analysed during the current study.

\section*{Declarations}
\subsection*{Competing interests} The authors declare no competing interests.

\end{document}